\numberwithin{equation}{section}
\theoremstyle{plain}
\newtheorem{theorem}{Theorem}[section]
\newtheorem{lemma}{Lemma}[section]
\newtheorem{corollary}{Corollary}[section]
\theoremstyle{definition}
\newtheorem{algorithm}{\normalfont\textsc{Algorithm}}
\newtheorem{assumption}{{}}
\newtheorem{assumptionm}{{}}
\newcommand{\norm}[1]{\|#1\|}
\newcommand{\Eproof}{}
\title{Proximal methods for minimizing the sum of a convex function and a composite function}
\author{Tran Dinh Quoc\footnote{Department of Electrical Engineering (ESAT-SCD) and Optimization in Engineering Center (OPTEC), K.U. Leuven, Kasteelpark Arenberg 10, B-3001
Leuven, Belgium.Email: quoc.trandinh,moritz.diehl@ esat.kuleuven.be} {~\LARGE{$\cdot$}}  Moritz Diehl\footnotemark[1]}
\date{}
\begin{document}
\maketitle

%+ Abstract.
\begin{abstract}
This paper extends the algorithm schemes proposed in \cite{Nesterov2007a} and \cite{Nesterov2007b} to the minimization of the sum of a composite objective function and a convex function. Two proximal point-type schemes are provided and their global convergence is investigated. The worst case complexity bound is also estimated under certain Lipschitz conditions and nondegeneratedness. The algorithm is then accelerated to get a faster convergence rate for the strongly convex case. 

\vskip 0.1cm
\noindent\textbf{Keywords: }{Gradient scheme $\cdot$ proximal-type point method $\cdot$ composite function $\cdot$ regularization technique $\cdot$ nonlinear optimization.}
\end{abstract}

%%%%%%%%%%%%%%%%%%%%%%%%%%%%%%%%%%%%%%%%%%%%%%%%%%%%%%%%%%%%%%%%%%%%%%%%%%%%%%%%%%%%%%%%%%%%%%%
%+1. Introduction
%%%%%%%%%%%%%%%%%%%%%%%%%%%%%%%%%%%%%%%%%%%%%%%%%%%%%%%%%%%%%%%%%%%%%%%%%%%%%%%%%%%%%%%%%%%%%%%
\section{Introduction}\label{sec:intro} 
%\noindent\textit{1.1. Problem statement and motivation. } 
In this paper we consider the following minimization problem:
\begin{equation}\label{eq:NLP}
\min\left\{f(x):= g(x) + \phi(F(x)) ~|~ x\in\Omega \right\}, \tag{P} 
\end{equation}
where $\Omega$ is a nonempty, closed convex subset in $\mathbf{R}^n$, $g :\mathbf{R}^n\to\mathbf{R}$, $F:\mathbf{R}^n\to\mathbf{R}^m$ is continuous differentiable on an open set $\mathcal{F}$ of $\mathbf{R}^n$ and $\phi:\mathbf{R}^m\to\mathbf{R}$ is a proper, lower semicontinuous and convex.

Problem \eqref{eq:NLP} covers many practical problems in optimization, signal processing and statistics. 
For instance, when $F$ reduces to the identity mapping, problem \eqref{eq:NLP} collapes to an optimization problem on a convex set.
If we take $g\equiv 0$ and $\phi(\cdot) = \norm{\cdot}_2^2$ then \eqref{eq:NLP} reduces to a classical least squares problem. 
Moreover, when $g(\cdot) = \norm{\cdot}_{*}$ with a given norm (e.g., Hubber norm, $l_1$-norm) this problem becomes a sparse least squares problem. Sparse least squares problems often appear in signal processing and statistics (see, e.g. \cite{Figueiredo2008,Tropp2006,Wright2007}). 
Another example of \eqref{eq:NLP} is the $l_1$-penalized problem of a nonlinear program, which is represented as 
\begin{equation}\label{eq:L_1_penal}
\min\left\{ p(x) + c\left(\norm{q(x)}_1 + [r(x)]_{+}\right), ~x\in\Omega \right\}, 
\end{equation}
where $p$ is the objective function, $q(x)=0$ is the equality constraints, $r(x)\leq 0$ is then inequality constraints of the original problem, $c>0$ is a penalty parameter, $\Omega$ characterizes for the simple constraints which is assumed to be convex and $[z]_{+} := \sum_{j=1}^l\max\{0,z_j\}$. 
If we define $g := p$, $\phi(u,v) := c\norm{u}_1 + [v]_{+}$ and $F(x) := (q^T, r^T)^T$ then the $l_1$-penalized problem \eqref{eq:L_1_penal} can be reformulated equivalently to \eqref{eq:NLP}.
In some cases, we want to compute the minimum norm solution of a nonlinear systems $F(x) = 0$, then this problem can be reformulated as
\begin{equation}
\min_{x} \rho\norm{x}_2^2 + \norm{F(x)}, 
\end{equation}
where $\rho>0$ is a given parameter. This problem is indeed a particular form of \eqref{eq:NLP}.

The last example in our interest is the problem resulting from nonlinear programming using sharp augmented Lagrangian function \cite{Gasimov2002,Rockafellar1997}. This problem has the following form: 
\begin{equation*}\label{eq:sharp_Lagrange_func}
\min\left\{ H(x,u) := p(x) + u^Tq(x) + c\norm{q(x)}_1 ~|~ x\in\Omega \right\},  
\end{equation*}
where $p$ is the objective function, $q$ is the function of the equality constraints of the original problem, $c>0$ is a penalty parameter.
If we define $g(x) := p(x) + u^Tq(x)$, $\phi(\cdot) := c\norm{\cdot}$ and $F(x) := q(x)$ then this problem becomes a particular case of \eqref{eq:NLP}.  

Let us look at the literature on theory and methods related to problem \eqref{eq:NLP}. They can be roughly classified in two frameworks. The first class is the problem of minimizing the sum of two objective functions and the second one is minimizing a composite objective function. These two classifications, of course, can be theoretically combined in a unified framework as we will see later. However, it is more convenient to exploit the special structure of the problem if we consider it in the form of \eqref{eq:NLP}.
The minimization problems of the sum of two objective functions as well as the minimization of a composite objective function have been investigated early in many research papers. For instance, Mine \textit{et al} \cite{Mine1981} considered the problem of minimizing the sum of two objective functions, where the first function is assumed to be smooth and the other one is assumed to be simple.
Fukushima and Mine \cite{Fukushima1981} then considered the problem of minimizing composite objective function, where the outer function is assumed to be nondifferentiable. A popular case of minimizing a composite objective function is least squares problems, where the outer function is taken of the form $\norm{\cdot}_2^2$.   
This problem class is then extended to the generalized outer function that is assumed to be convex (see, e.g., \cite{Burke1995,Lewis2008,Nesterov2007a,Yamakawa1989}).
Alternatively, there are myriad of research papers consider the minimization problem of the sum of two objective function (see, e.g., \cite{Nesterov2007b,Polyakova1986,Tuy1994,Womersley1985}). The methods for solving this problem have been quite extensively studied.  
For instance, DC (difference of two convex functions) decomposition, splitting backward-forward methods are the methods for solving some sub-classes of this problem. 

In our framework, the algorithm for solving problem \eqref{eq:NLP} on the one hand can be considered as an extension of the gradient schemes that were considered in \cite{Nesterov2007a} for solving nonlinear systems under nonsmooth least squares problems and then \cite{Nesterov2007b} for minimizing composite of two objective functions. 
On the other hand, it can be regarded as a restrict variant of the proximal point algorithm framework in \cite{Lewis2008}. Here, if we define $c(x) := (f(x), F^T(x), x^T)^T$ and $h(u,v, x) := u + \phi(v) + \delta_{\Omega}(x)$ that is convex, where $\delta_{\Omega}(x)$ is the indicator function of the convex set $\Omega$, i.e.
\begin{equation*}\label{eq:indicator_Omega}
\delta_{\Omega}(x) := \begin{cases}0 ~~&\text{if $x\in\Omega$},\\ +\infty ~~&\text{otherwise}, \end{cases} 
\end{equation*}
then problem \eqref{eq:NLP} can be reformulated as 
\begin{equation}\label{eq:Lewis_framework}
\min_{x\in\mathbf{R}^n}h(c(x)). 
\end{equation}
In \cite{Lewis2008}, the authors provided a generic framework so called proximal point method for solving \eqref{eq:Lewis_framework}. The proposed algorithm can be considered as a generalized of the classical proximal point methods introduced by Martinet \cite{Martinet1970}. The theory in this paper is quite general and cover many classes of problems in optimization. This formulation was earlier considered by Burke and Ferris in \cite{Burke1995}.

In this paper, motivated from \cite{Quoc2009b}, a generic algorithm framework so called \textit{sequential convex programming} (SCP) method for solving nonlinear programming, we continue extending the idea of Nesterov in \cite{Nesterov2007a} and \cite{Nesterov2007b} to problem \eqref{eq:NLP}. The main idea of SCP method is to keep the convex substructure of the original problem as much as possible and to convexify the nonconvex part by exploiting the state-of-the-art of the theory and methods in convex optimization.
Let us consider a nonlinear programming problem of the form:
\begin{eqnarray}\label{eq:scp_nlp}
&&\min_{x}p(x) \nonumber\\
[-1.5ex]\\[-1.5ex]
&&\text{s.t.}~ q(x) = 0, ~x\in\Omega,\nonumber 
\end{eqnarray}
where the objective function $p:\mathbf{R}^n\to\mathbf{R}$ and $\Omega\subseteq\mathbf{R}^n$ are assumed to be convex, $g:\mathbf{R}^n\to\mathbf{R}^m$ is assumed to be twice continuously differentiable.
The SCP methods generates an iterative sequence $\{x^k\}_{k\geq 0}$ starting from $x^0\in\Omega$, and computing $x^{k+1}$ by solving the convex subproblem:
\begin{eqnarray}\label{eq:scp_subprob}
&&\min_{x}p(x^k+d) \nonumber\\
[-1.5ex]\\[-1.5ex]
&&\text{s.t.}~ q(x^k) + q'(x^k)d = 0, ~x^k+d\in\Omega,\nonumber 
\end{eqnarray}
where $q'(x^k)$ is the Jacobian matrix of $q$ at $x^k$, to get a solution $d^k$ and setting $x^{k+1} := x^k + \alpha_k d^k$, where $\alpha_k\in (0,1]$ is a given step size.   
 
Note that the convex subproblem \eqref{eq:scp_subprob} may in general have no solution because of the linearized inconsistency. The SCP algorithm may be failed in practice. A popular strategy to handle the linearized inconsistency is to relax the subproblem \eqref{eq:scp_subprob} by introducing slack variables. For instance, this problem can be relaxed as follows:
\begin{eqnarray}\label{eq:scp_subprob_relax}
&&\min_{x}p(x^k+d) + c\sum_{i=0}^m(t_i+s_i)\nonumber\\
&&\text{s.t.}~ q(x^k) + q'(x^k)d = t -s, ~x^k+d\in\Omega,\\
&& ~~~~~ t, s\geq 0,\nonumber 
\end{eqnarray}
where $c>0$ is a penalty parameter. The relaxed problem \eqref{eq:scp_subprob_relax} can be reformulated equivalently to
\begin{eqnarray*}\label{eq:scp_subprob_penalty}
&&\min_{x}p(x^k+d) + c\norm{q(x^k) + q'(x^k)d}_1\nonumber\\
[-1.5ex]\\[-1.5ex]
&&\text{s.t.} ~x^k+d\in\Omega,\nonumber.
\end{eqnarray*}
However, if the objective function $p$ is not strongly convex then the search direction $d^k$ may not be a descent direction. Therefore, a regularization term $\frac{\rho}{2}\norm{d}^2$ should be added to ensure that $d^k$ is a descent direction. The subproblem \eqref{eq:scp_subprob_penalty} now becomes:
\begin{eqnarray*}\label{eq:scp_subprob_penalty_regu}
&&\min_{x}p(x^k+d) + c\norm{q(x^k) + q'(x^k)d}_1 + \frac{\rho_k}{2}\norm{d}^2\nonumber\\
[-1.5ex]\\[-1.5ex]
&&\text{s.t.} ~x^k+d\in\Omega,\nonumber.
\end{eqnarray*}
This problem collapses to the form \ref{eq:subprob2} below.

It has been proved in \cite{Quoc2009b} that under mild conditions, the SCP method converges to a stationary point of the original problem \eqref{eq:scp_nlp} locally in linear rate. The global convergence behaviour of the SCP method has been left unconsidered yet, however. 
        
The aim of this paper is to consider the theoretical aspects of global behaviour of the proximal point methods for solving problem \eqref{eq:NLP} as a bridge connected to the SCP method. The results in this paper is preliminary and should be further considered for the practice purpose.
We first propose a generic algorithm scheme which is based on two different subproblems. We prove some technical results and the convergence of the algorithmic scheme. For the unconstrained case, we are able to provide a worst case global complexity bound under certain conditions. 
When $g$ is strongly convex, the algorithm is accelerated to get a faster global convergence rate that is usually used in gradient methods for convex optimization \cite{Nesterov2004,Nesterov2008a}.    

Throughout the paper, we require the following assumption. 
\begin{assumption}\label{as:A0}
The proper, lower semicontinuous and convex function $\phi$ is Lipschitz continuous on the range space of $F'(x)$, $R_{\phi} := \text{range}F'(x)$, for all $x\in\mathbf{R}^n$ with a global Lipschitz constant $L_{\phi}>0$, i.e.
\begin{equation}\label{eq:phi_lipschitz}
\left|\phi(u) - \phi(v) \right| \leq L_{\phi}\norm{u-v}, ~\forall u, v \in R_{\phi}.
\end{equation}   
\end{assumption}
An example of the function $\phi$ is $\phi(u) = \norm{u}$, where $\norm{\cdot}$ can be taken any norm. For instance, it can be:
\begin{itemize}
\item[i.]   The $l_1$-norm that often appears in the penalty methods, $\phi(\cdot) = \norm{\cdot}_1$;
\item[ii.]  The Euclidean norm frequently used in the Gauss-Newton and the regularization methods, $\phi(\cdot) = \norm{\cdot}_2$;
\item[iii.] The Hubber-norm that is defined by $\phi(u) = \sum_{i=1}^m\sigma(u_i)$, where $\sigma(t)$ is defined as 
\begin{equation*}\label{eq:hubber_norm}
\sigma(t) = \begin{cases} \frac{1}{2}t^2 ~~&\text{if $|t|\leq T$}\\ Tt-\frac{T^2}{2} ~~&\text{otherwise}, \end{cases}
\end{equation*}
where $T > 0$ is given (see, e.g., \cite{Boyd2004}). 
\end{itemize}
Generally, the $\norm{\cdot}$ in the definition of $\phi$ can be any norm, which gives us a freedom choice. Thus, in practice, we can choose the norm $\norm{\cdot}$ such that the Lipschitz constant $L_{\phi}$ as small as possible.

For simplicity of discussion, the Euclidean norm is assumed to be used throughout this paper. We denote by $\nabla g$ is the gradient vector of a scalar function from $\mathbf{R}^n$ to $\mathbf{R}$, $F'$ is the Jacobian matrix of a vector function $F$ from $\mathbf{R}^n$ to $\mathbf{R}^m$.
For a convex function $f:C\to\mathbf{R}\cup\{+\infty\}$, where $C$ is convex set in $\mathbf{R}^n$, $\partial f(x)$ denotes the subdifferential of $f$ at $x$. Each element $\xi\in\partial f(x)$ is called a subgradient of $f$ at $x$.  
The function $f$ is said to be strongly convex with a parameter $\tau>0$ on $C$ if $f(\cdot) - \frac{\tau}{2}\norm{\cdot}$ is convex on $C$ (see, e.g., \cite{Boyd2004}). For a given set $X\subseteq\mathbf{R}$, $\text{int}(X)$ denotes the set of interior point of $X$.

Since problem \eqref{eq:NLP} is nonconvex, a local minimizer (if exist) may not be a global one. A point $x^{*}\in\Omega$ is said to be a stationary (critical) point to problem \eqref{eq:NLP} if
\begin{equation}\label{eq:FNC_NLP}
0 \in \partial g(x^{*}) + F'(x^{*})^T\partial\phi(F(x^{*})) + N_{\Omega}(x^{*}),  
\end{equation}
where $\partial g(x^{*})$ is the subdifferential of $g$ if $g$ is proper, lower semicontinuous and convex, and $\nabla g(x^{*})$, the gradient vector of $g$ if $g$ is differentiable; $F'(x^{*})$ is the Jacobian matrix of $F$ at $x^{*}$ and $F'(x^{*})^T$ is its conjugate operator; $\partial\phi(\bar{F})$ is the subdifferential of $\phi$ at $\bar{F} := F(x^{*})$; and $N_{\Omega}(x^{*})$ is the normal cone of the convex set $\Omega$ at $x^{*}$, i.e.:
\begin{equation*}\label{eq:N_Omega_0}
N_{\Omega}(x^{*}) := \begin{cases} \left\{ w\in\mathbf{R}^n~|~ w^T(y-x^{*}) \geq 0, ~y\in\Omega \right\}, &~\text{if}~ x^{*}\in\Omega,\\
\emptyset, &~\text{otherwise}.\end{cases}
\end{equation*}
Here, we implicitly use the chain rule, which is assumed to be satisfied in our problem setting. The condition \eqref{eq:FNC_NLP} is referred as a necessary optimality condition for \eqref{eq:NLP}. This condition can also expressed as follows:    
\begin{equation*}\label{eq:well_def_cond}
\partial \phi(\bar{F}) \cap \left\{ v~|~ -F'(x^{*})^Tv \in \partial g(x^{*}) + N_{\Omega}(x^{*})\right\} \neq\emptyset, 
\end{equation*}
where $\bar{F}=F(x^{*})$.
Let us denote by $S^{*}$ is the set of critical points of \eqref{eq:NLP} and $S^{*}$ is assumed to be nonempty. 

For a given $x\in\Omega$, we consider the following subproblem:
\begin{equation}\label{eq:subprob1}
\min\left\{ g(x) + \nabla g(x)^Td + \phi(F(x)+F'(x)d) + \frac{\rho}{2}\norm{d}^2 ~|~ x + d\in\Omega \right\}, \tag{$\textrm{P}_1(x)$} 
\end{equation}
where $\rho>0$ is a regularization parameter.
Since $\Omega$ is nonempty, closed and convex, and $\phi$ is proper, lower semicontinuous and convex, this problem has a unique solution.
 
Alternatively, when $g$ is proper, lower semicontinuous and convex, subproblem \ref{eq:subprob1} is slightly changed to:
\begin{equation}\label{eq:subprob2}
\min\left\{ g(x+d) + \phi(F(x)+F'(x)d) + \frac{\rho}{2}\norm{d}^2 ~|~ x + d\in\Omega \right\}. \tag{$\textrm{P}_2(x)$}
\end{equation}
This problem is also strongly convex, which has unique solution. 

A generic algorithm framework for solving problem \eqref{eq:NLP} is briefly described as:
\begin{enumerate}
\item \textit{Initialization}: Choose an initial point $x^0\in\Omega$ and a parameter $\rho_0>0$.
\item \textit{Main iteration}: For each $k=0,1,\dots$. Solve the strongly convex subproblem $\textrm{P}_1(x^k)$ (or $\textrm{P}_2(x^k)$)
to get a unique solution $d^{k}$. If the stopping criterion is not satisfied then set $x^{k+1} := x^k + \alpha_kd^k$ for a given step size $\alpha_k\in (0,1]$, update $\rho_k$ (if necessary) and repeat. 
\end{enumerate}

Particularly, if $\phi$ is identical to zero, subproblem \ref{eq:subprob1} can be considered as a subproblem of classical gradient methods \cite{Nesterov2004} (with regularization term). Alternatively, the subproblem \ref{eq:subprob2} is a subproblem in framework of classical proximal point methods \cite{Martinet1970,Rockafellar1976}.
Note that the subproblem \ref{eq:subprob1} is also closely related to the Levenberg-Marquardt algorithm for solving least squares problems or trust-region methods when the $l_2$-norm is chosen.

The rest of the paper is organized as follows. The next section presents a gradient mapping concept and proves the technical results, which will be used in the sequel. Section \ref{sec:gradient_alg} describes a proximal-point scheme for solving \eqref{eq:NLP} and proves its convergence.
Section \ref{sec:uncon_case} considers the unconstrained cases and provides a global complexity estimate for the previous algorithm.
The last section presents a special case of problem \eqref{eq:NLP}, where the function $g$ is strongly convex with a ``sufficiently large'' parameter. In this case, an accelerating proximal-type scheme is applied to this problem. The global convergence is investigated and the complexity bound is estimated.

%%%%%%%%%%%%%%%%%%%%%%%%%%%%%%%%%%%%%%%%%%%%%%%%%%%%%%%%%%%%%%%%%%%%%%%%%%%%%%%%%%%%%%%%%%%%%%%
%+2. Gradient mapping and algorithmic scheme
%%%%%%%%%%%%%%%%%%%%%%%%%%%%%%%%%%%%%%%%%%%%%%%%%%%%%%%%%%%%%%%%%%%%%%%%%%%%%%%%%%%%%%%%%%%%%%%
\section{Gradient mapping and its properties}\label{sec:grad_method}
Let us first recall some definitions related to the theory in this paper \cite{Boyd2004,Nesterov2004,Nesterov2007b}. As before, for a given convex set $\Omega$, the normal cone of $\Omega$ at $x$ is defined by
\begin{equation}\label{eq:N_Omega}
N_{\Omega}(x) := \begin{cases} \left\{ w\in\mathbf{R}^n~|~ w^T(y-x) \geq 0,~ y\in\Omega \right\}, ~&\text{if}~ x\in\Omega\\
\emptyset, ~&\text{otherwise}.
\end{cases}
\end{equation}
The set of feasible directions to $\Omega$ at $x$ is given by
\begin{equation}\label{eq:F_Omega}
F_{\Omega}(x) := \left\{ d\in\mathbf{R}^n~|~ d = t(y-x), ~y\in\Omega, ~t\geq 0 \right\}, 
\end{equation}
Let us define
\begin{equation}\label{eq:dir_deriv}
Df(x^{*})[d] := \nabla g(x^{*})^Td + \xi(F(x^{*}))^TF(x^{*})d, 
\end{equation}
with $\xi(x^{*}) \in \partial\phi(F(x^{*}))$ the subgradient matrix of $\phi$ at $F(x^{*})$. 
Recall that the necessary optimality condition of problem \eqref{eq:NLP} is 
\begin{equation*}\label{eq:FONC}
0 \in \nabla g(x^{*}) + F'(x^{*})^T\partial \phi(F(x^{*})) + N_{\Omega}(x^{*}).  
\end{equation*}
Then this condition can be expressed equivalently to
\begin{equation}\label{eq:FONC_new}
Df(x^{*})[d] \geq 0, ~~\forall d \in F_{\Omega}(x^{*}).  
\end{equation}

Now, let us define the following mapping
\begin{eqnarray}
&&\psi(y; x ) := f(x) + \nabla f(x)^T(y-x) + \phi(F(x)+F'(x)(y-x)). \label{eq:psi_1}
\end{eqnarray}
Then the convex subproblem \ref{eq:subprob1} can be rewritten as
\begin{eqnarray}
&&f_{\rho}(x) := \min\left\{ \psi(y;x) + \frac{\rho}{2}\norm{y-x}^2 ~|~ y\in\Omega \right\}.\label{eq:subprob1_a}
\end{eqnarray}
Since this problem is uniquely solvable, $f_{\rho}(x)$ is well-defined (finite). Let us denote by $V_{\rho}(x)$ the global solution of this problem, i.e.:
\begin{eqnarray}
&&V_{\rho}(x) =  \text{Arg}\!\min\left\{ \psi(y;x) + \frac{\rho}{2}\norm{y-x}^2 ~|~ y\in\Omega \right\}. \label{eq:V1_rho}
\end{eqnarray}
From these definitions, we have $f_{\rho}(x) = \psi(V_{\rho}(x);x) + \frac{\rho}{2}\norm{x-V_{\rho}(x)}^2$.
The necessary and sufficient optimality condition for subproblem \eqref{eq:subprob1_a} becomes
\begin{eqnarray}
&&\left[\nabla f(x) + \rho(V_{\rho}(x)-x) + F'(x)^T\xi(x)\right]^T(y - V_{\rho}(x)) \geq 0, ~~\forall y\in\Omega, \label{eq:NFO_subprob1_a}
\end{eqnarray}
where $\xi(x) \in \partial\phi(F(x) + F'(x)(V_{\rho}(x)-x))$.
We define a new mapping $G_{\rho}$ as 
\begin{equation}\label{eq:gradient_mapping}
G_{\rho}(x) := \rho(x-V_{\rho}(x)). 
\end{equation}
Then $G_{\rho}$ is referred as a \textit{gradient mapping} of problem \eqref{eq:subprob1_a} (see, e.g., \cite{Nesterov2004,Nesterov2007b}). 

Alternatively, for the subproblem \ref{eq:subprob2}, we define 
\begin{eqnarray}
&&\tilde{\psi}(y;x) := f(y) + \phi(F(x) + F'(x)(y-x))\label{eq:psi_2},\\
&&\tilde{V}_{\rho}(x) := \text{Arg}\!\min\left\{ \tilde{\psi}(y;x) + \frac{\rho}{2}\norm{y-x}^2 ~|~ y\in\Omega \right\},\label{eq:V2_rho}\\
\text{and}~&&\tilde{f}_{\rho}(x) := \tilde{\psi}(\tilde{V}_{\rho}(x);x) + \frac{\rho}{2}\norm{\tilde{V}_{\rho}(x)-x}^2.\label{eq:subprob2_a}    
\end{eqnarray}
The optimality condition for problem \eqref{eq:V2_rho} is
\begin{eqnarray}
&&\left[\nabla f(\tilde{V}_{\rho}(x)) + \rho(\tilde{V}_{\rho}(x)-x) + F'(x)^T\xi(x)\right]^T(y - \tilde{V}_{\rho}(x)) \geq 0, ~~\forall y\in\Omega, \label{eq:NFO_subprob2_a}
\end{eqnarray}
and the gradient mapping $\tilde{G}_{\rho}$ associated with \ref{eq:subprob2} is defined as:
\begin{equation}\label{eq:gradient_mapping}
\tilde{G}_{\rho}(x) := \rho(x-\tilde{V}_{\rho}(x)). 
\end{equation}
Let us denote by $d_{\rho}(x) := V_{\rho}(x) - x$, $\tilde{d}_{\rho}(x) := \tilde{V}_{\rho}(x)-x$, $r_{\rho}(x) := \norm{d_{\rho}(x)}$ and $\tilde{r}_{\rho}(x) := \norm{\tilde{d}_{\rho}(x)}$. The mapping $d_{\rho}$ (resp., $\tilde{d}_{\rho}$) can be considered as a search direction of the proximal algorithm scheme. 
We have the following conclusions.

%+ Lemma 2.1.
\begin{lemma}\label{le:stationary_point}
If $V_{\rho}(x) = x$ (resp., $\tilde{V}_{\rho}(x) = x$) then $x$ is a stationary point of \eqref{eq:NLP}. 
\end{lemma}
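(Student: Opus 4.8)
The plan is to read the claim straight off the first–order optimality conditions of the two subproblems, namely \eqref{eq:NFO_subprob1_a} and \eqref{eq:NFO_subprob2_a}, by substituting the hypothesis and observing that the quadratic regularization term then contributes nothing.

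First I would treat subproblem \ref{eq:subprob1}. Assume $V_{\rho}(x) = x$, so that the increment $d_{\rho}(x) = V_{\rho}(x) - x$ vanishes. Then in \eqref{eq:NFO_subprob1_a} the regularization contribution $\rho(V_{\rho}(x)-x)$ is zero, and the argument of the subgradient, $F(x) + F'(x)(V_{\rho}(x)-x)$, collapses to $F(x)$, so that $\xi(x) \in \partial\phi(F(x))$. Hence \eqref{eq:NFO_subprob1_a} reduces to $[\nabla g(x) + F'(x)^{T}\xi(x)]^{T}(y - x) \geq 0$ for all $y \in \Omega$, which by the definition \eqref{eq:N_Omega} of the normal cone means $-\nabla g(x) - F'(x)^{T}\xi(x) \in N_{\Omega}(x)$. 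Rearranging and using $\xi(x) \in \partial\phi(F(x))$ gives $0 \in \nabla g(x) + F'(x)^{T}\partial\phi(F(x)) + N_{\Omega}(x)$, which is exactly the critical-point condition \eqref{eq:FNC_NLP} (with $\partial g(x) = \{\nabla g(x)\}$ in the differentiable case). For subproblem \ref{eq:subprob2} the argument is the same line by line, now starting from \eqref{eq:NFO_subprob2_a}: with $\tilde{V}_{\rho}(x) = x$ the term $\rho(\tilde{V}_{\rho}(x)-x)$ drops out, the (sub)gradient of $g$ is evaluated at $x$ itself, and again $\xi(x) \in \partial\phi(F(x))$, yielding $0 \in \partial g(x) + F'(x)^{T}\partial\phi(F(x)) + N_{\Omega}(x)$, i.e.\ \eqref{eq:FNC_NLP}.

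I do not expect any genuine obstacle: the statement is an immediate specialization of the subproblem optimality conditions at the fixed point $V_{\rho}(x) = x$ (resp.\ $\tilde{V}_{\rho}(x) = x$). The only points that need a little care are noting that the quadratic term disappears once the solution coincides with $x$, that the linearization of $F$ collapses to $F(x)$ so the subgradient lands in $\partial\phi(F(x))$, and — for subproblem \ref{eq:subprob2} — interpreting the symbol for the derivative of $g$ in \eqref{eq:NFO_subprob2_a} as a subgradient, so that the conclusion matches the subdifferential form of \eqref{eq:FNC_NLP}.
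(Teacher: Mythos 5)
Your argument is exactly the paper's: substitute $V_{\rho}(x)=x$ (resp.\ $\tilde{V}_{\rho}(x)=x$) into the subproblem optimality condition \eqref{eq:NFO_subprob1_a} (resp.\ \eqref{eq:NFO_subprob2_a}), observe that the regularization term vanishes and the subgradient argument collapses to $F(x)$, and recover the stationarity condition \eqref{eq:FNC_NLP}. The proposal is correct and simply spells out the details the paper leaves implicit.
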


% Proof of Lemma 2.1.
\begin{proof}
Substituting $V_{\rho}(x) = x$ (resp., $\tilde{V}_{\rho}(x)=x$) into \eqref{eq:NFO_subprob1_a} (resp., \eqref{eq:NFO_subprob2_a}), we again obtain \eqref{eq:FONC}.
\Eproof 
\end{proof}

%+ Lemma 2.1.
\begin{lemma}\label{le:V_func}
The norm of gradient mapping $\norm{G_{\rho}(x)}_{*}$ is nondecreasing in $\rho$, and the norm $r_{\rho}(x)$ of the search direction $d_{\rho}(x)$ is nonincreasing in $\rho$. If $g$ is convex then these conclusions also hold for $\norm{\tilde{G}_{\rho}(x)}_{*}$ and  $\tilde{r}_{\rho}(x)$, respectively. Moreover,
\begin{equation}\label{eq:f_f_rho}
f(x) - f_{\rho}(x) \geq \frac{\rho}{2}r_{\rho}^2(x). 
\end{equation}
\end{lemma}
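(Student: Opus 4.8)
The plan is to derive every monotonicity statement from the variational characterizations \eqref{eq:NFO_subprob1_a} and \eqref{eq:NFO_subprob2_a} together with monotonicity of the subdifferential of the convex model $\psi(\cdot;x)$ (resp. $\tilde\psi(\cdot;x)$), and to get \eqref{eq:f_f_rho} from the strong convexity of the regularized model.

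First I would record the elementary identity $G_\rho(x) = -\rho\,d_\rho(x)$, so that $\norm{G_\rho(x)}_* = \norm{G_\rho(x)} = \rho\, r_\rho(x)$ (the Euclidean norm being self-dual) and likewise $\norm{\tilde G_\rho(x)}_* = \rho\,\tilde r_\rho(x)$; it therefore suffices to control $r_\rho(x)$ and $\rho\, r_\rho(x)$. Fix $x$ and $0<\rho_1<\rho_2$, and abbreviate $y_i := V_{\rho_i}(x)$, $d_i := y_i-x$, $a := r_{\rho_1}(x) = \norm{d_1}$, $b := r_{\rho_2}(x) = \norm{d_2}$. Denoting by $s_i := \nabla g(x) + F'(x)^T\xi_i$ the ``gradient'' terms in \eqref{eq:NFO_subprob1_a}, with $\xi_i\in\partial\phi(F(x)+F'(x)d_i)$, the crucial observation is that $s_1$ and $s_2$ are subgradients of one and the same convex function $\psi(\cdot;x)$ at the points $y_1$ and $y_2$, hence $(s_1-s_2)^T(d_1-d_2)\ge 0$. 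Putting $y=y_2$ into \eqref{eq:NFO_subprob1_a} written at level $\rho_1$, putting $y=y_1$ into it at level $\rho_2$, adding the two inequalities, and using that monotonicity to discard the $(s_1-s_2)$-contribution, I obtain $(\rho_1 d_1 - \rho_2 d_2)^T(d_2-d_1)\ge 0$. Expanding this and invoking Cauchy--Schwarz ($d_1^Td_2\le ab$) collapses everything to the single scalar inequality $(a-b)(\rho_1 a - \rho_2 b)\le 0$. A short sign analysis (with the degenerate cases $a=0$ or $b=0$ handled directly, in which by Lemma \ref{le:stationary_point} $x$ is already stationary) forces simultaneously $b\le a$ and $\rho_2 b\ge \rho_1 a$, which are exactly ``$r_\rho(x)$ nonincreasing in $\rho$'' and ``$\norm{G_\rho(x)}_*$ nondecreasing in $\rho$''. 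If $g$ is convex then $\tilde\psi(\cdot;x)$ is convex as well, the selections $\tilde s_i := w_i + F'(x)^T\xi_i$ with $w_i\in\partial g(y_i)$ are again subgradients of a single convex function, and the identical computation applied to \eqref{eq:NFO_subprob2_a} yields the statements for $\tilde r_\rho(x)$ and $\norm{\tilde G_\rho(x)}_*$.

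For \eqref{eq:f_f_rho}, the map $y\mapsto\psi(y;x)+\frac{\rho}{2}\norm{y-x}^2$ is strongly convex with modulus $\rho$ (its nonsmooth part $\psi(\cdot;x)$ is convex), so its unique minimizer $V_\rho(x)$ satisfies the quadratic-growth bound $\psi(y;x)+\frac{\rho}{2}\norm{y-x}^2\ge f_\rho(x)+\frac{\rho}{2}\norm{y-V_\rho(x)}^2$ for all $y\in\Omega$. Taking $y=x$ and using $\psi(x;x)=f(x)$ gives $f(x)\ge f_\rho(x)+\frac{\rho}{2}\norm{x-V_\rho(x)}^2 = f_\rho(x)+\frac{\rho}{2}r_\rho^2(x)$, which is \eqref{eq:f_f_rho}.

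I expect the only real obstacle to be the bookkeeping in the two variational inequalities: checking that the normal-cone terms are absorbed by the ``$\forall y\in\Omega$'' quantifier and, above all, that $s_1,s_2$ (resp. $\tilde s_1,\tilde s_2$) are genuinely subgradients of the same convex function, so that subdifferential monotonicity is legitimately available. Once the scalar inequality $(a-b)(\rho_1 a-\rho_2 b)\le 0$ is in hand, extracting both monotonicity conclusions is entirely elementary.
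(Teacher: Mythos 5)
Your proof is correct, but it follows a genuinely different route from the paper's. The paper's argument is a marginal-function/perspective argument: it sets $k(t,y):=\psi(y;x)+\frac{1}{2t}\norm{y-x}^2$, uses joint convexity of $k$ in $(t,y)$ to conclude that $\eta(t):=\min_{y\in\Omega}k(t,y)$ is convex, identifies $\eta'(t)$ with $-\frac{1}{2}\norm{G_{1/t}(x)}^2$, and reads off the monotonicity of $\norm{G_\rho(x)}$ from the monotonicity of $\eta'$; the bound \eqref{eq:f_f_rho} then falls out of the subgradient inequality $\eta(0)\geq\eta(t)-t\,\eta'(t)$ with $t=1/\rho$ and $\eta(0)=f(x)$. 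Your approach instead works directly with the variational inequalities \eqref{eq:NFO_subprob1_a} at two parameter values, exploits monotonicity of $\partial\psi(\cdot;x)$ to cancel the subgradient terms, and reduces everything to the scalar inequality $(a-b)(\rho_1 a-\rho_2 b)\leq 0$, from which both monotonicity claims follow at once; \eqref{eq:f_f_rho} you obtain from $\rho$-strong convexity of the regularized model and quadratic growth at its minimizer, evaluated at $y=x$. What your route buys is self-containedness and rigor at exactly the points the paper glosses over: the paper's formula for $\eta'(t)$ requires a Danskin-type justification (and as printed contains a sign slip), and its derivation of the monotonicity of $r_\rho(x)$ from that of $\eta'$ is not spelled out, whereas your single scalar inequality delivers both conclusions cleanly and extends verbatim to the $\tilde\psi$ case once one notes that $w_i+F'(x)^T\xi_i$ with $w_i\in\partial g(y_i)$ are subgradients of the same convex function. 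What the paper's route buys is brevity and a conceptual link between $f-f_\rho$ and the derivative of the marginal function, giving \eqref{eq:f_f_rho} and the monotonicity from one object. Both are valid proofs of the lemma.
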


%+ Proof Lemma 2.1.
\begin{proof}
It is sufficient to prove the first part. The second part can be proved similarly.
Since the function $k(t,y) := \psi(y;x) + \frac{1}{2t}\norm{y-x}^2$ is convex in two variables $y$ and $t$. We have $\eta(t) := \min_{y\in \Omega}k(t,y)$ is still convex. It is easy to show that $\eta'(t) = -\frac{1}{2t^2}\norm{V_{1/t}(x)-x}^2 = -\frac{1}{2t^2}\norm{d_{1/t}(x)}^2 = \frac{1}{2}\norm{G_{1/t}(x)}^2$.
Since $\eta(t)$ is convex, $\eta'(t)$ is nondecreasing in $t$. This implies that $\norm{G_{1/t}(x)}$ is nonincreasing in $t$. Thus $\norm{G_{\rho}(x)}$ is nondecreasing in $\rho$ and $\norm{d_{\rho}(x)}$ is nonincreasing in $\rho$.

To prove the last inequality \eqref{eq:f_f_rho}, it is implies from the convexity of $\eta$ that
\begin{equation}\label{eq:lm21_eq1}
f(x) = \eta(0) \geq \eta(t) + s(t)(0-t) = \eta(t) + \frac{1}{2t}r^2_{1/t}(x), 
\end{equation}
where $s(t)\in\partial\eta(t)$. On the other hand, $f_{\rho}(x) = \eta(1/\rho)$. Substituting this relation into \eqref{eq:lm21_eq1}, we obtain \eqref{eq:f_f_rho}. The lemma is proved.
\Eproof
\end{proof}

This lemma gives us an observation that if we increase the parameter $\rho$ in the subproblem \ref{eq:subprob1} (resp., \ref{eq:subprob1}) then we will obtain a short search direction $d_{\rho}(x)$ (resp., $\tilde{d}_{\rho}(x)$). Therefore, a suitable choice of the parameter $\rho$ in practice is necessary.

In the sequel, we introduce the following assumptions.
%+ Assumption A1. 
\begin{assumption}\label{as:A1}
The function $F$ is Lipschitz continuously differentiable on $\mathbf{R}^n$ with a Lipschitz constants $L_F$, i.e.
\begin{eqnarray}\label{eq:F_Lipschitz_continuous}
\norm{F'(x) - F'(y)} \leq L_F\norm{y-x}, ~~\forall x, y\in \mathcal{F}. 
\end{eqnarray}
\end{assumption}
\begin{assumption}\label{as:A1b}
The function $g$ is Lipschitz continuously differentiable with a Lipschitz constant $L_g>0$ on $\text{dom}(g)$, i.e. 
\begin{eqnarray}\label{eq:g_Lipschitz_continuous}
\norm{\nabla g(x) - \nabla g(y)} \leq L_g\norm{y-x}, ~~\forall x, y \in\text{dom}g. 
\end{eqnarray}
\end{assumption}
\setcounter{assumptionm}{2}
\begin{assumptionm}\label{as:A1c}
The function $g$ is proper, lower semicontinuous and convex on its domain $\text{dom}(g)$.
\end{assumptionm}

Under the conditions \eqref{eq:F_Lipschitz_continuous} and \eqref{eq:g_Lipschitz_continuous}, applying the mean-valued theorem \cite{Ortega2000} we can easily prove that:
\begin{eqnarray}
&& \norm{F(y) - F(x) - F'(x)(y-x)} \leq \frac{L_F}{2}\norm{y-x}^2 \label{eq:estimate_b},\\
&& \left|g(y) - g(x) - \nabla g(x)^T(y-x)\right| \leq \frac{L_g}{2}\norm{y-x}^2. \label{eq:estimate_a}
\end{eqnarray}
The following lemma show an upper estimation for the objective function $f$ of \eqref{eq:NLP}.

% Lemma 2.1.
\begin{lemma}\label{le:over_estimate}
Suppose that Assumptions \ref{as:A0} and \ref{as:A1} are satisfied. If, in addition, Assumption \ref{as:A1b} holds then
\begin{eqnarray}
&&\left| f(y) - \psi(y;x) \right| \leq \frac{1}{2}(L_g+L_{\phi}L_F)\norm{y-x}^2. \label{eq:over_estimate1}
\end{eqnarray}  
Alternatively, if, in addition, Assumption \ref{as:A1c} is satisfied then
\begin{eqnarray}
&&\left| f(y) - \tilde{\psi}(y;x) \right| \leq \frac{1}{2}L_{\phi}L_F\norm{y-x}^2. \label{eq:over_estimate2}
\end{eqnarray}
\end{lemma}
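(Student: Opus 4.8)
The plan is to estimate $f(y) - \psi(y;x)$ (resp. $f(y) - \tilde\psi(y;x)$) by splitting the difference into a ``$g$-part'' and a ``$\phi$-part'' and bounding each separately using the quadratic estimates \eqref{eq:estimate_b} and \eqref{eq:estimate_a} together with the Lipschitz continuity of $\phi$ from Assumption \ref{as:A0}. Recall $f(y) = g(y) + \phi(F(y))$ and $\psi(y;x) = g(x) + \nabla g(x)^T(y-x) + \phi(F(x) + F'(x)(y-x))$. First I would write
\begin{equation*}
f(y) - \psi(y;x) = \big[g(y) - g(x) - \nabla g(x)^T(y-x)\big] + \big[\phi(F(y)) - \phi(F(x) + F'(x)(y-x))\big].
\end{equation*}
The first bracket is controlled directly by \eqref{eq:estimate_a}, giving a bound $\tfrac{L_g}{2}\norm{y-x}^2$. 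For the second bracket, I would apply the triangle-type inequality \eqref{eq:phi_lipschitz} with $u = F(y)$ and $v = F(x) + F'(x)(y-x)$ — noting that both the argument difference $F(y) - F(x) - F'(x)(y-x)$ lies in a set where the Lipschitz bound applies — to obtain $\big|\phi(F(y)) - \phi(F(x)+F'(x)(y-x))\big| \le L_\phi\norm{F(y) - F(x) - F'(x)(y-x)}$, and then invoke \eqref{eq:estimate_b} to bound the right-hand side by $L_\phi \cdot \tfrac{L_F}{2}\norm{y-x}^2$. Combining the two bounds with the triangle inequality yields \eqref{eq:over_estimate1}.

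For the second estimate \eqref{eq:over_estimate2}, the argument is identical except that $\tilde\psi(y;x) = f(y) + \phi(F(x)+F'(x)(y-x)) - \phi(F(y))$ keeps $f(y)$ — hence $g(y)$ and the full outer value $\phi(F(y))$ appear — so the ``$g$-part'' cancels entirely: $f(y) - \tilde\psi(y;x) = \phi(F(y)) - \phi(F(x)+F'(x)(y-x))$. Here Assumption \ref{as:A1c} (convexity of $g$) is what lets us use the subproblem \ref{eq:subprob2} in the first place, and no smoothness of $g$ is needed. The same two-step bound ($\phi$ Lipschitz, then \eqref{eq:estimate_b}) gives $\big|f(y) - \tilde\psi(y;x)\big| \le \tfrac{L_\phi L_F}{2}\norm{y-x}^2$.

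The only subtlety — and the step I would be most careful about — is the applicability of \eqref{eq:phi_lipschitz}: Assumption \ref{as:A0} states Lipschitz continuity of $\phi$ on the range of $F'(x)$, so I must check that both points $F(x)+F'(x)(y-x)$ and $F(y)$ (or at least their difference, which is what the inequality actually uses) lie in the relevant set $R_\phi$, or more precisely that the chord between them is covered. In the intended applications $\phi$ is a norm (hence globally Lipschitz on all of $\mathbf{R}^m$), so this is automatic; in the general setting one relies on the range-space formulation of the assumption, and I would state explicitly that $F(y) - F(x) - F'(x)(y-x)$ together with the base point stays within the domain where \eqref{eq:phi_lipschitz} is valid. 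Everything else is the routine combination of the mean-value estimates already recorded in \eqref{eq:estimate_b}–\eqref{eq:estimate_a}.
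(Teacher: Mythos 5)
Your proof is correct and follows essentially the same route as the paper's: decompose $f(y)-\psi(y;x)$ into the $g$-remainder, bounded by \eqref{eq:estimate_a}, and the $\phi$-difference, bounded via the Lipschitz continuity \eqref{eq:phi_lipschitz} combined with \eqref{eq:estimate_b}, with the $g$-part cancelling entirely in the $\tilde{\psi}$ case. Your added caution about where \eqref{eq:phi_lipschitz} is applicable is a point the paper's proof silently glosses over, but it does not alter the argument.
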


% Proof Lemma 2.1.
\begin{proof} Using the Lipschitz continuity of $\phi$ and estimations \eqref{eq:estimate_a} and \eqref{eq:estimate_b}, we have
\begin{eqnarray}
\left| f(y) - \psi(y;x)\right| &&\leq \left| g(y) - g(x) - \nabla g(x)^T(y-x)\right| \nonumber\\ 
&&+ \left| \phi(F(y)) - \phi(F(x) + F'(x)(y-x))\right| \nonumber\\
&&\leq \frac{(L_g+L_{\phi}L_f)}{2}\norm{y-x}^2,\nonumber
\end{eqnarray}
which proves \eqref{eq:over_estimate1}.
Similarly, we have
\begin{eqnarray}
\left| f(y) - \tilde{\psi}(y;x)\right| &&\leq \left| \phi(F(y)) - \phi(F(x) + F'(x)(y-x))\right| \nonumber\\
&&\leq \frac{L_{\phi}L_f}{2}\norm{y-x}^2,\nonumber
\end{eqnarray}
which proves \eqref{eq:over_estimate2}.
\Eproof
\end{proof}
From these estimations, it follows that
\begin{eqnarray}\label{eq:upper_est}
&&f(y) \leq m_{\rho}(y;x) := \psi(y;x) + \frac{\rho}{2}\norm{y-x}^2,\nonumber\\
[-1.5ex]\\[-1.5ex]
 (\text{resp.,}~ &&f(y)\leq \tilde{m}_{\rho}(y;x) := \tilde{\psi}(y;x)+\frac{\tilde{\rho}}{2}\norm{y-x}^2), ~\forall y\in \Omega,\nonumber 
\end{eqnarray}
provided that $\rho \geq L_g+L_{\phi}L_F$ (resp., $\tilde{\rho} \geq L_{\phi}L_F$). The algorithm scheme is then designed to generate a sequence $\left\{x^k\right\}\subset\Omega$ starting from $x^0\in\Omega$ and decreases the model $m_{\rho}(y;x)$ (resp., $\tilde{m}_{\rho}(y;x)$).

The following lemma provides some useful properties that will be used in the sequel.
% Lemma 2.2.
\begin{lemma}\label{le:main_estimation}
Under Assumptions \ref{as:A0}-\ref{as:A1}. If Assumption \ref{as:A1b} holds then, for any $x \in\Omega$, we have
\begin{eqnarray}
&&f(x) - f(V_{\rho}(x)) \geq \frac{2\rho - (L_g + L_{\phi}L_F)}{2}r_{\rho}^2(x) = \frac{2\rho -(L_g + L_{\phi}L_F)}{2\rho^2}\norm{G_{\rho}(x)}^2. \label{eq:lm22_est1}\\
&&Df(x)[d_{\rho}(x)] \leq -\rho r_{\rho}(x)^2 = -\frac{1}{\rho}\norm{G_{\rho}(x)}^2. \label{eq:lm22_est2}
\end{eqnarray}
If Assumption \ref{as:A1c} holds then, for any $x \in\Omega$, we have
\begin{eqnarray}
&&f(x) - f(\tilde{V}_{\rho}(x)) \geq \frac{2\rho - L_{\phi}L_F}{2}\tilde{r}_{\rho}(x)^2 = \frac{2\rho - L_{\phi}L_F}{2\rho^2}\norm{\tilde{G}_{\rho}(x)}^2. \label{eq:lm22_est3}\\
&&Df(x)[\tilde{d}_{\rho}(x)] \leq -\rho\tilde{r}(x)^2 = -\frac{1}{\rho}\norm{\tilde{G}_{\rho}(x)}^2. \label{eq:lm22_est4}
\end{eqnarray}
\end{lemma}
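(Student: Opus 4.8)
The two ``descent'' bounds \eqref{eq:lm22_est1} and \eqref{eq:lm22_est3} come from combining feasibility of $x$ in the subproblem with the over-estimate of Lemma~\ref{le:over_estimate}. For \eqref{eq:lm22_est1}: the objective of \eqref{eq:subprob1_a} is $\rho$-strongly convex in $y$ with minimizer $V_{\rho}(x)$, so for every $y\in\Omega$ it is bounded below by its value at $V_{\rho}(x)$ plus $\tfrac{\rho}{2}\norm{y-V_{\rho}(x)}^2$; evaluating at $y=x\in\Omega$ (equivalently, invoking \eqref{eq:f_f_rho}) gives $\psi(V_{\rho}(x);x)\le f(x)-\rho\,r_{\rho}^2(x)$. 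Applying the over-estimate \eqref{eq:over_estimate1} at $y=V_{\rho}(x)$, namely $f(V_{\rho}(x))\le\psi(V_{\rho}(x);x)+\tfrac{1}{2}(L_g+L_{\phi}L_F)r_{\rho}^2(x)$, and adding the two inequalities yields $f(x)-f(V_{\rho}(x))\ge\tfrac{1}{2}\bigl(2\rho-(L_g+L_{\phi}L_F)\bigr)r_{\rho}^2(x)$; the second form in \eqref{eq:lm22_est1} is just the identity $\rho^2 r_{\rho}^2(x)=\norm{G_{\rho}(x)}^2$ coming from $G_{\rho}(x)=\rho(x-V_{\rho}(x))$. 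Inequality \eqref{eq:lm22_est3} is proved verbatim with $(\psi,V_{\rho},f_{\rho},G_{\rho})$ replaced by $(\tilde{\psi},\tilde{V}_{\rho},\tilde{f}_{\rho},\tilde{G}_{\rho})$, using \eqref{eq:over_estimate2} in place of \eqref{eq:over_estimate1} --- which is why the term $L_g$ drops out --- together with the analogue of \eqref{eq:f_f_rho} for $\tilde{f}_{\rho}$, available from Lemma~\ref{le:V_func} since Assumption~\ref{as:A1c} makes $g$ convex.

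For the directional-derivative estimate \eqref{eq:lm22_est2}, set $y=x$ in the optimality condition \eqref{eq:NFO_subprob1_a}; since $x-V_{\rho}(x)=-d_{\rho}(x)$ this becomes $\nabla g(x)^Td_{\rho}(x)+\xi(x)^TF'(x)d_{\rho}(x)+\rho\norm{d_{\rho}(x)}^2\le 0$, with $\xi(x)\in\partial\phi\bigl(F(x)+F'(x)d_{\rho}(x)\bigr)$. The only discrepancy with $Df(x)[d_{\rho}(x)]$ from \eqref{eq:dir_deriv} is that the latter uses a subgradient $\eta\in\partial\phi(F(x))$ at the base point; monotonicity of $\partial\phi$ closes the gap, since $\bigl(\xi(x)-\eta\bigr)^T\bigl(F'(x)d_{\rho}(x)\bigr)\ge0$ gives $\eta^TF'(x)d_{\rho}(x)\le\xi(x)^TF'(x)d_{\rho}(x)$, whence $Df(x)[d_{\rho}(x)]\le\nabla g(x)^Td_{\rho}(x)+\xi(x)^TF'(x)d_{\rho}(x)\le-\rho\,r_{\rho}^2(x)=-\tfrac{1}{\rho}\norm{G_{\rho}(x)}^2$. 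Estimate \eqref{eq:lm22_est4} is obtained the same way from \eqref{eq:NFO_subprob2_a}, with one extra reduction: the gradient term there is $\nabla g(\tilde{V}_{\rho}(x))$ rather than $\nabla g(x)$, so one additionally uses monotonicity of $\nabla g$ (valid since $g$ is convex under Assumption~\ref{as:A1c}), i.e.\ $\bigl(\nabla g(\tilde{V}_{\rho}(x))-\nabla g(x)\bigr)^T\tilde{d}_{\rho}(x)\ge0$, to dominate $Df(x)[\tilde{d}_{\rho}(x)]$ by $\nabla g(\tilde{V}_{\rho}(x))^T\tilde{d}_{\rho}(x)+\xi(x)^TF'(x)\tilde{d}_{\rho}(x)\le-\rho\,\tilde{r}_{\rho}^2(x)$.

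The statement is essentially an assembly of Lemmas~\ref{le:V_func} and \ref{le:over_estimate} with the optimality conditions, so there is no deep obstacle; the points that need care are (i) the bookkeeping of base points for the subgradients of $\phi$ and, in the second subproblem, for $\nabla g$, which is settled by monotonicity of these operators, and (ii) the observation that the sharp leading coefficient $2\rho$ (rather than $\rho$) in \eqref{eq:lm22_est1} and \eqref{eq:lm22_est3} genuinely requires the strong-convexity refinement $f(x)-f_{\rho}(x)\ge\tfrac{\rho}{2}r_{\rho}^2(x)$ of Lemma~\ref{le:V_func}, not merely the bound $\psi(V_{\rho}(x);x)\le f(x)-\tfrac{\rho}{2}r_{\rho}^2(x)$ from feasibility of $x$.
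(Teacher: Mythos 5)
Your proof is correct and follows essentially the same route as the paper's: the descent bounds come from the subproblem's optimality/strong convexity combined with the Lipschitz over-estimates of Lemma~\ref{le:over_estimate}, and the directional-derivative bounds come from the optimality condition at $y=x$ together with monotonicity of $\partial\phi$ (and of the gradient of $g$ for the second subproblem), exactly as in the paper's \eqref{eq:proof_lm22_est6}--\eqref{eq:proof_lm22_est8}. The only cosmetic difference is that you assemble \eqref{eq:lm22_est1} and \eqref{eq:lm22_est3} from the already-proved inequality \eqref{eq:f_f_rho} plus Lemma~\ref{le:over_estimate}, whereas the paper re-derives the same bound by directly unwinding \eqref{eq:NFO_subprob1_a}, the convexity of $\phi$, and the estimates \eqref{eq:estimate_a}--\eqref{eq:estimate_b}; both yield the identical coefficient $\tfrac{2\rho-(L_g+L_{\phi}L_F)}{2}$, and your closing remark correctly identifies why the refinement \eqref{eq:f_f_rho}, rather than mere feasibility of $y=x$, is needed for the factor $2\rho$.
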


%+ Proof Lemma 2.2.
\begin{proof}
Let use denote by $V := V_{\rho}(x)$ and $\tilde{V} = \tilde{V}_{\rho}(x)$, from the estimation \eqref{eq:estimate_a}, we have
\begin{eqnarray}
f(V) = g(V) + \phi(F(V)) \leq g(x) + \nabla g(x)^T(V-x) + \frac{L_g}{2}\norm{V-x}^2 + \phi(F(V)).\label{eq:proof_lm22_est1}
\end{eqnarray}
On the other hand, using the optimality condition \eqref{eq:NFO_subprob1_a} for $y=x\in\Omega$, it implies
\begin{equation}\label{eq:proof_lm22_est2}
\nabla g(x)^T(V-x) \leq -\rho\norm{V-x}^2 + \xi(x)^TF(x)(x-V(x)).  
\end{equation}
Since $\phi$ is convex, we
\begin{equation}\label{eq:proof_lm22_est3}
\phi(F(x)) - \phi(F(x)+F'(x)(V-x)) \geq \xi(x)F(x)(x-V), 
\end{equation}
where $\xi(x)\in\partial\phi(F(x)+F'(x)(V-x))$.
Combining \eqref{eq:proof_lm22_est1},\eqref{eq:proof_lm22_est2} and \eqref{eq:proof_lm22_est3}, we obtain
\begin{eqnarray}\label{eq:proof_lm22_est4}
f(V) \leq g(x) + \phi(F(x)) - \frac{2\rho-L_g}{2}\norm{V-x}^2 + \phi(F(V)) - \phi(F(x)+F'(x)(V-x)).  
\end{eqnarray}
Using the $L_{\phi}$-Lipschitz continuity of $\phi$ and estimation \eqref{eq:estimate_b}, we get
\begin{eqnarray}\label{eq:proof_lm22_est5}
\phi(F(V)) - \phi(F(x)+F'(x)(V-x)) &&\leq L_{\phi}\norm{F(V)-F(x)-F'(x)(V-x)}\nonumber\\
[-1.5ex]\\[-1.5ex]
&& \leq \frac{L_{\phi}L_F}{2}\norm{V-x}^2.\nonumber 
\end{eqnarray}
Plugging this inequality into \eqref{eq:proof_lm22_est3}, and noting that $\rho^2\norm{V-x}^2 = \norm{G_{\rho}(x)}^2$, we get \eqref{eq:lm22_est1}.

Now, we prove the second inequality. From the optimality condition \eqref{eq:FONC_new} of \eqref{eq:NLP}, we have 
\begin{eqnarray}\label{eq:proof_lm22_est6}
Df(x)[V-x] = \left[\nabla g(x) + F'(x)^T\bar{\xi}(x)\right]^T(V-x),
\end{eqnarray}
where $\bar{\xi}(x) \in\partial\phi(F(x))$. Using \eqref{eq:NFO_subprob1_a} with $y=x$, it implies
\begin{equation}\label{eq:proof_lm22_est7}
[F'(x)^T\bar{\xi}(x)]^T(V-x) \leq [F'(x)^T(\xi(x) - \bar{\xi}(x))]^T(V-x) - \rho\norm{V-x}^2.  
\end{equation}
By the convexity of $\phi$ we have
\begin{equation}\label{eq:proof_lm22_est8}
[F'(x)^T(\xi(x)-\bar{\xi}(x))]^T(V-x) = -(\bar{\xi}(x)-\xi(x))^TF'(x)(V-x) \leq 0. 
\end{equation}
Combining \eqref{eq:proof_lm22_est6}, \eqref{eq:proof_lm22_est7} and \eqref{eq:proof_lm22_est8}, we get
\begin{equation*}
Df(x)[V-x] \leq -\rho{V-x}^2 = -\frac{1}{\rho}\norm{G_{\rho}(x)}^2, 
\end{equation*}
which proves \eqref{eq:lm22_est2}.

If $g$ is convex then we have
\begin{eqnarray*}
f(\tilde{V}) = g(\tilde{V}) + \phi(F(\tilde{V})) \leq g(x) + \nabla g(\tilde{V})^T(\tilde{V}-x) + \phi(F(\tilde{V})). 
\end{eqnarray*}
Using this inequality and with the same argument as before, we can easily prove the inequalities \eqref{eq:lm22_est3} and \eqref{eq:lm22_est4}.
\Eproof
\end{proof}
%+ End of proof Lemma 4.
The inequality \eqref{eq:lm22_est2} (resp., \eqref{eq:lm22_est4}) shows that $d_{\rho}(x)$ (resp., $\tilde{d}_{\rho}(x)$) is a descent search direction of problem \eqref{eq:NLP}. 

Let us define the level set of the function $f$ restricted to $\Omega$ as follows:
\begin{equation}\label{eq:level_set}
\mathcal{L}_f(\alpha) := \left\{ y\in \Omega ~|~ f(y) \leq \alpha \right\}. 
\end{equation}
We have the following result.

%+ Lemma 2.3.
\begin{lemma}\label{le:level_set}
Suppose that $x \in \text{int}(\mathcal{L}_f(f(x))\subseteq\text{int}(\mathcal{F})$. Then if $\rho\geq L_g+L_{\rho}L_F$ (resp., $g$ is convex and $\rho\geq L_{\rho}L_F$) then $V_{\rho}(x)\in \mathcal{L}_f(f(x))$ (resp., $\tilde{V}_{\rho}(x)\in \mathcal{L}_f(f(x))$).  
\end{lemma}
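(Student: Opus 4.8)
The plan is a continuity (connectedness) argument in the regularization parameter $\rho$, pinned down at $\rho\to\infty$, where $V_\rho(x)$ collapses to $x$. Put $\ell:=L_g+L_{\phi}L_F$. I treat subproblem $\textrm{P}_1$; the case of $\textrm{P}_2$ (with $g$ convex) is identical after replacing Assumption~\ref{as:A1b} by Assumption~\ref{as:A1c}, $\ell$ by $L_{\phi}L_F$, and $(\psi,V_\rho,r_\rho)$ by $(\tilde\psi,\tilde V_\rho,\tilde r_\rho)$, and using \eqref{eq:lm22_est3} in place of \eqref{eq:lm22_est1}.

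First I would record two elementary facts. (i) $\rho\mapsto V_\rho(x)$ is continuous on $[\ell,\infty)$: for $\rho_1,\rho_2\ge\ell$, writing $V_i:=V_{\rho_i}(x)$ and using that $\psi(\cdot;x)+\tfrac{\rho_i}{2}\norm{\cdot-x}^2$ is $\rho_i$-strongly convex over $\Omega$ with minimizer $V_i$, one adds the two strong-convexity inequalities (each evaluated at the other minimizer) to get $\tfrac{\rho_1+\rho_2}{2}\norm{V_1-V_2}^2\le\tfrac{\rho_1-\rho_2}{2}\bigl(\norm{V_2-x}^2-\norm{V_1-x}^2\bigr)$, so that $\norm{V_1-V_2}^2\le\tfrac{|\rho_1-\rho_2|}{\rho_1+\rho_2}\,r_\ell(x)^2$ by the monotonicity of $r_\rho(x)$ from Lemma~\ref{le:V_func}. (ii) $V_\rho(x)\to x$ as $\rho\to\infty$: by \eqref{eq:f_f_rho} and the monotonicity of $r_\rho(x)$, $\tfrac{\rho}{2}r_\rho^2(x)\le f(x)-f_\rho(x)\le f(x)-\min\{\psi(y;x):\norm{y-x}\le r_\ell(x)\}<\infty$, the right-hand side being a finite constant independent of $\rho$, so $r_\rho(x)\to 0$. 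I also note that $\mathcal{L}_f(f(x))$ is closed, since $f=g+\phi\circ F$ is continuous on $\mathcal{F}$ (the finite convex $\phi$ being continuous on $\mathbf{R}^m$).

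Next I would run the trapping step. Because $x\in\text{int}(\mathcal{L}_f(f(x)))$, fact (ii) gives a $\bar\rho_0\ge\ell$ with $V_\rho(x)\in\mathcal{L}_f(f(x))$ for all $\rho\ge\bar\rho_0$. Let $\rho^{*}$ be the infimum of all $\rho\in[\ell,\bar\rho_0]$ such that $V_{\rho'}(x)\in\mathcal{L}_f(f(x))$ for every $\rho'\in[\rho,\bar\rho_0]$; the lemma is equivalent to $\rho^{*}=\ell$. Suppose $\rho^{*}>\ell$. By fact (i) and closedness of $\mathcal{L}_f(f(x))$, $V_{\rho^{*}}(x)=\lim_{\rho\downarrow\rho^{*}}V_\rho(x)\in\mathcal{L}_f(f(x))$; it cannot be interior (continuity would then keep $V_\rho(x)$ in $\mathcal{L}_f(f(x))$ just below $\rho^{*}$, contradicting minimality of $\rho^{*}$), so $V_{\rho^{*}}(x)\in\partial\mathcal{L}_f(f(x))$. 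The hypothesis $\text{int}(\mathcal{L}_f(f(x)))\subseteq\text{int}(\mathcal{F})$ places $V_{\rho^{*}}(x)$ in $\mathcal{F}$, so \eqref{eq:lm22_est1} of Lemma~\ref{le:main_estimation} applies and yields $f(x)-f(V_{\rho^{*}}(x))\ge\tfrac{2\rho^{*}-\ell}{2}r_{\rho^{*}}^2(x)\ge\tfrac{\rho^{*}}{2}r_{\rho^{*}}^2(x)\ge0$. Hence either $r_{\rho^{*}}(x)=0$, so $V_{\rho^{*}}(x)=x\in\text{int}(\mathcal{L}_f(f(x)))$, a contradiction, or $f(V_{\rho^{*}}(x))<f(x)$, whence continuity of $f$ near $V_{\rho^{*}}(x)$ forces $V_\rho(x)\in\mathcal{L}_f(f(x))$ for all $\rho$ in a two-sided neighborhood of $\rho^{*}$, again contradicting minimality of $\rho^{*}$. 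Thus $\rho^{*}=\ell$, and $V_\rho(x)\in\mathcal{L}_f(f(x))$ for all $\rho\ge\ell$.

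The hard part is precisely the boundary case $V_{\rho^{*}}(x)\in\partial\mathcal{L}_f(f(x))$: the descent estimate \eqref{eq:lm22_est1} rests on the quadratic bounds \eqref{eq:estimate_b}--\eqref{eq:estimate_a}, which are only guaranteed on $\mathcal{F}$, and it is exactly the inclusion $\text{int}(\mathcal{L}_f(f(x)))\subseteq\text{int}(\mathcal{F})$ that lets one apply it at the limiting point $V_{\rho^{*}}(x)$ — unique solvability of $\textrm{P}_1(x)$ alone does not put $V_{\rho^{*}}(x)$ inside $\mathcal{F}$. The remaining pieces, namely the continuity of $\rho\mapsto V_\rho(x)$ and the limit $V_\rho(x)\to x$, are routine consequences of strong convexity and Lemma~\ref{le:V_func}.
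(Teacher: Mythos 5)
Your argument is correct in substance, but it takes a genuinely different route from the paper. The paper fixes $\rho$ and argues by contradiction \emph{in space}: assuming $V_{\rho}(x)\notin\mathcal{L}_f(f(x))$, it takes the first point $x(\bar\alpha)=x+\bar\alpha(V_{\rho}(x)-x)$ where the segment from the interior point $x$ exits the level set, bounds $f(x(\bar\alpha))$ from above by convexity of $\phi$ together with \eqref{eq:estimate_a}--\eqref{eq:estimate_b}, and contradicts the descent inequality. You instead run a homotopy \emph{in the parameter}: Hölder continuity of $\rho\mapsto V_{\rho}(x)$ from strong convexity, the anchor $V_{\rho}(x)\to x\in\text{int}(\mathcal{L}_f(f(x)))$ as $\rho\to\infty$, and a first-exit argument along the curve $\rho\mapsto V_{\rho}(x)$, with Lemma \ref{le:main_estimation} supplying the contradiction at the critical parameter $\rho^{*}$. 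The paper's route is shorter and keeps every point it evaluates on the segment $[x,x(\bar\alpha)]$, which lies in the closure of $\text{int}(\mathcal{L}_f(f(x)))$, so the quadratic estimates are invoked only where the hypothesis guarantees them; your route buys a cleaner modular structure (you reuse Lemma \ref{le:main_estimation} rather than re-deriving an upper bound for $f$ along a segment) and establishes the conclusion for all $\rho\geq L_g+L_{\phi}L_F$ at once, at the cost of the extra machinery in facts (i)--(ii), which are themselves correctly proved.

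Two small caveats, both of the same order as imprecisions already present in the paper. First, your critical point $V_{\rho^{*}}(x)$ sits on $\partial\mathcal{L}_f(f(x))$, and the inclusion $\text{int}(\mathcal{L}_f(f(x)))\subseteq\text{int}(\mathcal{F})$ does not by itself place a boundary point of the level set inside the open set $\mathcal{F}$; one needs the (typographically garbled) hypothesis read as $\mathcal{L}_f(f(x))\subseteq\mathcal{F}$, which is also what the paper implicitly uses at its own boundary point $x(\bar\alpha)$. Second, invoking \eqref{eq:lm22_est1} at $V_{\rho^{*}}(x)$ requires \eqref{eq:estimate_b} between $x$ and $V_{\rho^{*}}(x)$, which strictly needs the segment $[x,V_{\rho^{*}}(x)]$ in $\mathcal{F}$, not just its endpoints; your construction only secures the endpoint, whereas the paper's spatial segment stays inside the level set by design. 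Neither issue breaks the argument under the reading of Assumptions \ref{as:A1}--\ref{as:A1b} that the paper itself adopts.
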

  
% Proof Lemma 2.3.
\begin{proof}
It is sufficient to prove the first statement. The second one can be done similarly.
It is trivial that $x\in\mathcal{L}_f(f(x))$. Assume, for contradiction, that $V_{\rho}(x)\notin\mathcal{L}_f(f(x))$. Since $x\in\text{int}(\mathcal{L}_f(f(x)))$, the line segment connected $x$ and $V_{\rho}(x)$ insects the boundary of $\mathcal{L}_f(f(x))$ at $x(\bar\alpha) = x + \bar{\alpha}(V_{\rho}(x)-x)$ for some $\bar{\alpha}\in (0,1)$.
From the definition of $x(\bar{\alpha})$ and \eqref{eq:f_f_rho}, we have
\begin{equation}\label{eq:proof_lm23_est1}
f(x(\bar\alpha)) \geq f(x) \geq f_{\rho}(x). 
\end{equation}
Consider $d:=F(x(\bar\alpha)) - F(x) - \bar{\alpha}F'(x)(V-x)$, by virtue of \eqref{eq:estimate_b} with $y=x(\bar\alpha)$, one has
\begin{equation}\label{eq:proof_lm23_est2}
\norm{d} \leq \frac{L_F}{2}\bar{\alpha}^2\norm{V-x}^2. 
\end{equation}
Using Assumptions\ref{as:A1}-\ref{as:A1b} and the convexity of $\phi$ we have.
\begin{eqnarray}\label{eq:proof_lm23_est3}
f(x(\bar\alpha)) &&= g(x+\bar{\alpha}(V-x)) + \phi(F(x)+\bar{\alpha}F'(x)(V-x) + d) \nonumber\\
&&\leq g(x) + \bar{\alpha}\nabla g(x)^T(V-x) + \frac{\bar{\alpha}^2L_g}{2}\norm{V-x}^2 + \phi(F(x)+\bar{\alpha}F'(x)(V-x)) + L_{\phi}\norm{d}  \nonumber\\
&&\leq (1-\bar{\alpha})f(x) + \bar{\alpha}[\psi(V;x) + \frac{\rho}{2}\norm{V-x}^2] - \frac{\bar{\alpha}[2\rho-\bar{\alpha}(L_g+L_{\rho}L_F)]}{2}\norm{V-x}^2.\nonumber
\end{eqnarray}
From \eqref{eq:proof_lm23_est1}, \eqref{eq:proof_lm23_est2} and \eqref{eq:proof_lm23_est3}, we obtain
\begin{equation*}
f(x) \leq f_{\rho}(x) - \frac{[2\rho-\bar{\alpha}(L_g+L_{\rho}L_F)]}{2}\norm{V-x}^2,
\end{equation*}
that is contradict to \eqref{eq:lm22_est1}.
\Eproof 
\end{proof}

%+ Lemma 2.5.
\begin{lemma}\label{le:upper_estimate}
Suppose that both $x$ and $V_{\rho}(x)$ in $\mathcal{F}$. Then
\begin{equation}\label{eq:upper_est}
f_{\rho}(x) \leq \min\left\{ f(x) + \frac{\bar{\rho}}{2}\norm{y-x}^2 ~|~ y\in\Omega \right\}, 
\end{equation}
where $\bar{\rho} := \rho+L_g+L_{\phi}L_F$.
Moreover, if $x^{*}$ is a solution to \eqref{eq:NLP} and $\mathcal{L}_f(f(x))\subset\mathcal{F}$ then
\begin{equation}\label{eq:upper_est_star}
f_{\rho}(x) \leq f^{*} + \frac{\bar{\rho}}{2}\norm{x^{*}-x}^2. 
\end{equation}
Alternatively, if both $x$ and $\tilde{V}_{\rho}(x)$ in $\mathcal{F}$ then
\begin{equation}\label{eq:upper_est2}
\tilde{f}_{\rho}(x) \leq \min\left\{ f(y) + \frac{\hat{\rho}}{2}\norm{y-x}^2 ~|~ y\in\Omega \right\}, 
\end{equation}
where $\hat{\rho} := \rho + L_{\phi}L_F$, and if $x^{*}$ is a solution to \eqref{eq:NLP} and $\mathcal{L}_f(f(x))\subset\mathcal{F}$ then
\begin{equation}\label{eq:upper_est_star2}
\tilde{f}_{\rho}(x) \leq f^{*} + \frac{\hat{\rho}}{2}\norm{x^{*}-x}^2. 
\end{equation} 
\end{lemma}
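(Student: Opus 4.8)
The plan is to read both pairs of inequalities straight off the over-estimation bounds of Lemma~\ref{le:over_estimate}, by substituting them into the definitions of $f_\rho$ and $\tilde f_\rho$ as minima. (I note in passing that the right-hand side of \eqref{eq:upper_est} should read $f(y)$ rather than $f(x)$ --- this is what makes it the exact analogue of \eqref{eq:upper_est2} and what makes the specialization \eqref{eq:upper_est_star} come out; I will prove it in that form.)

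First I would recall that, by \eqref{eq:subprob1_a}, $f_\rho(x) = \min\{\psi(y;x) + \tfrac{\rho}{2}\norm{y-x}^2 \mid y\in\Omega\}$. For any $y$ in the feasible set, estimate \eqref{eq:over_estimate1} gives $\psi(y;x) \le f(y) + \tfrac12(L_g + L_\phi L_F)\norm{y-x}^2$, hence $\psi(y;x) + \tfrac{\rho}{2}\norm{y-x}^2 \le f(y) + \tfrac{\bar\rho}{2}\norm{y-x}^2$ with $\bar\rho = \rho + L_g + L_\phi L_F$. Taking the minimum over $y\in\Omega$ on both sides yields \eqref{eq:upper_est}. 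For \eqref{eq:upper_est_star} I would simply take $y = x^*$ in that minimum: since $x^*$ solves \eqref{eq:NLP} we have $f(x^*) = f^* \le f(x)$, so $x^*\in\mathcal L_f(f(x))\subset\mathcal F$, the over-estimate is legitimate at $y = x^*$, and $f_\rho(x)\le f^* + \tfrac{\bar\rho}{2}\norm{x^*-x}^2$ follows.

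The second pair is entirely parallel. Under Assumption~\ref{as:A1c} one has $\tilde f_\rho(x) = \min\{\tilde\psi(y;x) + \tfrac{\rho}{2}\norm{y-x}^2\mid y\in\Omega\}$ by \eqref{eq:subprob2_a}, and \eqref{eq:over_estimate2} gives $\tilde\psi(y;x) \le f(y) + \tfrac12 L_\phi L_F\norm{y-x}^2$; adding $\tfrac{\rho}{2}\norm{y-x}^2$ and minimizing over $y\in\Omega$ produces \eqref{eq:upper_est2} with $\hat\rho = \rho + L_\phi L_F$, while substituting $y = x^*$ produces \eqref{eq:upper_est_star2}.

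The only point that needs care is domain bookkeeping: Lemma~\ref{le:over_estimate} presupposes that the points where it is applied lie in $\mathcal F$ (and in $\mathrm{dom}\,g$), whereas the minimum in \eqref{eq:upper_est}/\eqref{eq:upper_est2} is written over all of $\Omega$. So the pointwise inequality above should be understood as holding at every $y\in\Omega$ where $F$ is differentiable, and the hypotheses that $x$ and $V_\rho(x)$ (resp.\ $\tilde V_\rho(x)$) lie in $\mathcal F$ --- and, for the starred estimates, that $\mathcal L_f(f(x))\subset\mathcal F$ --- are exactly what guarantees this at the points that actually enter the argument, in particular at $y = x^*$. Beyond keeping track of this, there is no real obstacle; the core of the proof is the two-line insertion described above.
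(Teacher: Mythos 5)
Your proof is correct and is essentially the paper's own argument: the paper derives the pointwise over-estimate inline via the remainders $d_g(x,y)$ and $d_F(x,y)$ (which is exactly the content of Lemma~\ref{le:over_estimate} that you invoke), then minimizes over $y\in\Omega$ and substitutes $y=x^{*}$ for the starred bounds. Your observation that the right-hand side of \eqref{eq:upper_est} should read $f(y)$ rather than $f(x)$ is confirmed by the paper's own proof, whose final display has $g(y)+\phi(F(y))$ inside the minimum.
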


%+ Proof Lemma 2.5
\begin{proof}
For any $y\in\mathcal{F}$, we denote by $d_g(x,y) := g(y) - g(x) - \nabla g(x)(y-x)$ and $d_F(x,y):= F(y) - F(x) - F'(x)(y-x)$. By \eqref{eq:estimate_a} and \eqref{eq:estimate_b}, we have $\norm{d_g(x,y)} \leq \frac{L_g}{2}\norm{y-x}^2$ and $\norm{d_F(x,y} \leq \frac{L_F}{2}\norm{y-x}^2$. 
Since both $x$ and $V_{\rho}(x)$ in $\mathcal{F}$, using the Lipschitz continuity of $\phi$, we have
\begin{eqnarray}
f_{\rho}(x) &&= \min\left\{\psi(y;x) + \frac{\rho}{2}\norm{y-x}^2 ~|~ y\in\Omega \right\} \nonumber\\
&&= \min\left\{ g(y) + d_g(x,y) + \phi(F(y)-d_F(x,y)) + \frac{\rho}{2}\norm{y-x}^2 ~|~ y\in\Omega\right\} \nonumber\\
&&\leq \min\left\{ g(y) + \phi(F(y)) + \frac{\rho+L_g+L_{\phi}L_F}{2}\norm{y-x}^2 ~|~ y\in\Omega \right\}.\nonumber
\end{eqnarray}
By denoting $\bar\rho := \rho+L_g+L_{\phi}L_F$, we obtain \eqref{eq:upper_est}.
The estimation \eqref{eq:upper_est_star} is then proved by substituting $y=x^{*}$ into \eqref{eq:upper_est}.
The remainder is proved similarly.
\Eproof 
\end{proof}

%+ 3. Gradient methods 
\section{Algorithm framework and its global convergence}\label{sec:gradient_alg} 
Let us denote by $\bar{L} := L_g+L_{\phi}L_F)$ (resp., $\hat{L} := L_{\phi}L_F$). As it is showed in Lemma \ref{le:over_estimate} and \ref{le:upper_estimate} that if the regular parameter $\rho$ is chosen by $\rho \geq \bar{L}$ (resp., $\rho\geq \hat{L}$) then $d_{\rho}(x)$ (resp., $\tilde{d}_{\rho}(x)$ is a descent direction to problem \eqref{eq:NLP}. However, if $\rho$ is to big, the algorithm may generates a short step. Balancing between these issues plays an important role in implementation. 
In the following algorithm, we combine the gradient scheme with a simple line-search strategy to determine $\rho$ adaptively. 

The algorithm is described as follows:

%+ Algorithm 3.1
%%%%%%%%%%%%%%%%%%%%%%%%%%%%%%%%%%%%%%%%%%%%%%%%%%%%%%%%%%%%%%%%%%%%%%%%%
\noindent\rule[1pt]{\textwidth}{1.0pt}{~~}
\begin{algorithm}\vskip -0.3cm\label{alg:A1}{~}\end{algorithm}
\vskip -0.4cm
\noindent\rule[1pt]{\textwidth}{0.5pt}
\noindent{\bf Initialization:} Choose $x^0$ in $\Omega$ and fix $L_0 \in (0, \bar{L}]$ (resp., $L_0\in (0, \hat{L}]$. Set $k:=0$.\\
\noindent{\bf Iteration $k$:} For a given $x^k$, execute the two steps below:
\begin{itemize}
\item[]\textit{Step 1}: Find $\rho_k\in [L_0, 2\bar{L}]$ (resp., $\rho_k\in [L_0, 2\hat{L}]$) such that $f(V_{\rho_k}(x^k)) \leq f_{\rho_k}(x^k)$ (resp., $f(\tilde{V}_{\rho_k}(x^k)) \leq \tilde{f}_{\rho_k}(x^k)$).
\item[]\textit{Step 2}: Update the new iteration $x^{k+1} := V_{\rho_k}(x^k)$ (resp., $x^{k+1} := \tilde{V}_{\rho_k}(x^k)$. Increase $k$ by $1$ and go back to Step 1.
\end{itemize}
\vskip-0.3cm
\noindent\rule[1pt]{\textwidth}{1.0pt}
%%%%%%%%%%%%%%%%%%%%%%%%%%%%%%%%%%%%%%%%%%%%%%%%%%%%%%%%%%%%%%%%%%%%%%%%%
%+ End of Algorithm 3.1.

The computational cost of solving the subproblem \ref{eq:subprob1} (resp., \ref{eq:subprob2}) mostly depends on the structure of the outer convex function $\phi$ and $\Omega$ (resp., additionally, the structure of the function $g$). If $\Omega\equiv\mathbf{R}^n$ (the unconstrained case) and $\phi(u) = \norm{u}$ then the subproblem \ref{eq:subprob1} can be solved by a standard linear algebraic procedure (see \cite{Nesterov2007a}).   

To prove the convergence of Algorithm \ref{alg:A1}, we require the following assumption.
%+ Assumption A2.
\begin{assumption}\label{as:A2}
The set $\mathcal{F}$ is sufficiently large such that $\mathcal{L}_f(f(x))\subset\mathcal{F}$. 
\end{assumption}
If this assumption is satisfies then $\mathcal{L}_f(f(x^k)) \subset\mathcal{F}$ for all $k\geq 0$ due to the nonincreasing monotonicity of the sequence $\{f(x^k)\}_{k\geq 0}$.

%+ Theorem 3.1.
\begin{theorem}\label{thm:main_theorem}
Suppose that Assumptions \ref{as:A0}-\ref{as:A1} hold. Then for any $k\geq 0$:
\begin{itemize}
\item[a)] If Assumption \ref{as:A1b} is satisfied then
\begin{equation}\label{eq:thm31_est1}
f(x^k) - f^{*} \geq \frac{1}{2}L_0\sum_{i=k}^{\infty}r_{\rho_i}(x^i)^2 \geq \frac{L_0}{2}\sum_{i=k}^{\infty}r_{2\bar{L}}(x^i)^2. 
\end{equation}
\item[b)] If Assumption \ref{as:A1c} is satisfied then
\begin{equation}\label{eq:thm31_est2}
f(x^k) - f^{*} \geq \frac{1}{2}L_0\sum_{i=k}^{\infty}\tilde{r}_{\rho_i}(x^i)^2 \geq \frac{L_0}{2}\sum_{i=k}^{\infty}\tilde{r}_{2\hat{L}}(x^i)^2. 
\end{equation} 
\end{itemize}
Consequently, one has
\begin{equation}\label{eq:thm31_est3}
\lim_{k\to\infty}\norm{x^{k+1}-x^k} = 0, 
\end{equation}
and the set of limit points $Q^{*}$ of the sequence $\{x^k\}_{k\geq 0}$is connected. 
If this sequence is bounded (in particular, $L_f(f(x^0))$ is bounded) then every limit point is a critical point of \eqref{eq:NLP}. More further, if the set of limit point $Q^{*}$  is finite then the sequence $\left\{x^k\right\}$ converges to a point $x^{*}$ in $S^{*}$. 
\end{theorem}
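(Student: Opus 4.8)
The plan is to run the standard "sum of squares of gradient-mapping norms telescopes" argument, then harvest the topological consequences from the step-to-zero property. First I would prove parts (a) and (b). Fix $k$; for each iterate $x^i$ with $i\geq k$, Step~1 guarantees $f(x^{i+1}) = f(V_{\rho_i}(x^i)) \leq f_{\rho_i}(x^i)$, and Lemma~\ref{le:V_func}, inequality \eqref{eq:f_f_rho}, gives $f(x^i) - f_{\rho_i}(x^i) \geq \tfrac{\rho_i}{2} r_{\rho_i}(x^i)^2$. Combining, $f(x^i) - f(x^{i+1}) \geq \tfrac{\rho_i}{2} r_{\rho_i}(x^i)^2 \geq \tfrac{L_0}{2} r_{\rho_i}(x^i)^2$ since $\rho_i \geq L_0$. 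Summing over $i = k, k+1, \dots, N$ telescopes the left side to $f(x^k) - f(x^{N+1}) \leq f(x^k) - f^{*}$, and letting $N\to\infty$ yields the first inequality in \eqref{eq:thm31_est1}. For the second, I use Lemma~\ref{le:V_func}: $r_{\rho}(x)$ is nonincreasing in $\rho$ and $\rho_i \leq 2\bar{L}$, so $r_{\rho_i}(x^i) \geq r_{2\bar{L}}(x^i)$. Part (b) is identical with $\tilde{\psi}, \tilde{V}_\rho, \tilde{r}_\rho, \hat{L}$ in place of their untilded counterparts, using the convex-$g$ half of Lemma~\ref{le:V_func}.

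Next, \eqref{eq:thm31_est3}: since the series $\sum_{i=0}^\infty r_{\rho_i}(x^i)^2$ converges (take $k=0$ in \eqref{eq:thm31_est1}), its general term tends to zero, and $\norm{x^{i+1}-x^i} = \norm{V_{\rho_i}(x^i) - x^i} = r_{\rho_i}(x^i) \to 0$. For connectedness of the limit set $Q^{*}$: this is the classical Ostrowski-type result — a sequence with $\norm{x^{k+1}-x^k}\to 0$ has a connected (possibly empty or unbounded) set of cluster points; I would cite this or give the one-line argument that if $Q^*$ split into two disjoint closed nonempty pieces at positive distance $\delta$, the iterates would have to jump across the gap infinitely often, contradicting the step size going to zero.

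For the final two assertions I would argue as follows. Suppose $\{x^k\}$ is bounded and let $x^\infty$ be a limit point, $x^{k_j}\to x^\infty$. Since $\norm{x^{k_j+1}-x^{k_j}}\to 0$ we also have $V_{\rho_{k_j}}(x^{k_j}) = x^{k_j+1}\to x^\infty$, and passing to a further subsequence we may assume $\rho_{k_j}\to\bar\rho \in [L_0, 2\bar L]$. Writing the optimality condition \eqref{eq:NFO_subprob1_a} at $x = x^{k_j}$ and taking $j\to\infty$ — using continuity of $\nabla g$ and $F'$, the fact that $\rho_{k_j}(V_{\rho_{k_j}}(x^{k_j}) - x^{k_j}) = G_{\rho_{k_j}}(x^{k_j})\to 0$ (because $\norm{G_{\rho_i}(x^i)} = \rho_i r_{\rho_i}(x^i) \leq 2\bar L\, r_{\rho_i}(x^i)\to 0$), outer semicontinuity of the subdifferential $\partial\phi$, and closedness of the normal cone map $N_\Omega$ — collapses the inequality to \eqref{eq:FONC}, so $x^\infty \in S^{*}$. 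Finally, if $Q^{*}$ is finite, then a connected finite set is a singleton, so $Q^{*} = \{x^{*}\}$ and the whole sequence converges to $x^{*}\in S^{*}$. The main obstacle is the passage to the limit in the variational inequality: one must be careful that the subgradient $\xi(x^{k_j}) \in \partial\phi(F(x^{k_j}) + F'(x^{k_j})(V_{\rho_{k_j}}(x^{k_j})-x^{k_j}))$ stays bounded (which follows from Assumption~\ref{as:A0}, since $\phi$ is $L_\phi$-Lipschitz, so $\norm{\xi}\leq L_\phi$) and that its limit lands in $\partial\phi(F(x^\infty))$, which is exactly graph-closedness of $\partial\phi$ together with the convergence of the argument $F(x^{k_j}) + F'(x^{k_j})(x^{k_j+1}-x^{k_j}) \to F(x^\infty)$.
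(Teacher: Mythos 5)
Your proposal follows essentially the same route as the paper: the identical telescoping of $f(x^i)-f(x^{i+1})\geq \frac{\rho_i}{2}r_{\rho_i}(x^i)^2\geq\frac{L_0}{2}r_{\rho_i}(x^i)^2$ via Step~1 and \eqref{eq:f_f_rho}, the monotonicity of $r_\rho$ in $\rho$ for the second inequality, summability giving \eqref{eq:thm31_est3}, and the Ostrowski-type conclusion for the finite limit-set case. The only difference is that you spell out the passage to the limit in the variational inequality \eqref{eq:NFO_subprob1_a} (boundedness of the subgradients by $L_\phi$, graph-closedness of $\partial\phi$, vanishing of $G_{\rho_{k_j}}(x^{k_j})$), where the paper simply says "passing to the limit through a subsequence"; your version is a correct and welcome elaboration of the same argument.
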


% Proof this theorem
\begin{proof}
From Step 1 of Algorithm \ref{alg:A1}, we have $f(x^{i+1}) \leq f_{\rho_i}(x^i)$. Combining this relation and \eqref{eq:f_f_rho}, and note that
$r_{\rho_i}(x^i)$ is nonincreasing in $\rho_i$ by virtue of Lemma \ref{le:upper_estimate}, we have
\begin{equation}\label{eq:thm1_est1}
f(x^{i+1}) \leq f_{\rho_i}(x^i) \leq f(x^i) - \frac{\rho_i}{2}r_{\rho_i}^2(x^i) \leq f(x^i) - \frac{L_0}{2}r_{\rho_i}^2(x^i) \leq f(x^k) - \frac{L_0}{2}r_{2\bar{L}}^2(x^i). 
\end{equation}
Summing up the inequality \eqref{eq:thm1_est1} from $i=k$ to $i=N\geq k$ we get  
\begin{equation}\label{eq:thm1_est2}
f(x^k) - f(x^{N+1}) \geq \frac{1}{2}L_0\sum_{i=k}^Nr_{\rho_i}^2(x^i) \geq \frac{L_0}{2}\sum_{i=k}^{N}r^2_{2\bar{L}}(x^i). 
\end{equation}
Note that the sequence $\{f(x^k)\}_{k\geq 0}$ is bounded from below, passing to the limit as $N\to\infty$ in \eqref{eq:thm1_est2} we obtain \eqref{eq:thm31_est1}. 
The inequalities \eqref{eq:thm31_est2} are proved similarly.

Now, we replace $k=0$ into \eqref{eq:thm31_est1}, it implies that $\sum_{i=0}^{\infty}r_{\rho_i}(x^i)^2 <+\infty$. Since $r_{\rho_i}(x^i) = \norm{x^i-x^{i+1}}$, we get $\lim_{i\to\infty}\norm{x^i-x^{i+1}} = 0$, which proves \eqref{eq:thm31_est3}.

If the sequence $\{x^k\}_{k\geq 0}$ is bounded, by passing to the limit through a subsequence and combining with Lemma \ref{le:stationary_point}, we easily prove that every limit point is a critical point.
If the set of limit points $Q^{*}$ is finite. By applying the result in \cite{Ostrowski1966}[Chapt. 28], we obtain the proof of the remaining conclusion.
\Eproof
\end{proof}
%+ End of proof.

In the framework of least squares problems, it is often that the number of data points is larger than the number of parameters (or variables). In this case, we have $m > n$. 
A critical point $x^{*}\in S^{*}$ of \eqref{eq:NLP} is said to be \textit{nondegenerate} if $\sigma^{*}_F := \sigma_{\min}(F'(x^{*})) > 0$, where $\sigma_{\min}(F'(x^{*}))$ is the smallest singular value of matrix $F'(x^{*})$.
We require the following assumption.

%+ Assumption A.3.b  
\begin{assumption}\label{as:A3b}
The set of nondegenerate critical points $x^{*}\in S^{*}$ of \eqref{eq:NLP} is nonempty. 
\end{assumption}
We also denote by $\sigma^{*}_g := \sigma_{\min}(\nabla g(x^{*})^T) \geq 0$, the smallest singular value of vector $\nabla g(x)^T$. (This notation is convenient for the case $n=1$).

A set $S^{*}_{\phi}$ is said to be a set of \textit{weak sharp minima} for the function $\phi$ if there exists a constant $\gamma_{\phi} > 0$ such that 
\begin{equation}\label{eq:sharp_minima}
\phi(u) - \phi_{\min} \geq \gamma_{\phi}\text{dist}(u, S^{*}_{\phi}), ~\forall u\in \text{dom}\phi,
\end{equation}
where $\phi_{\min} := \min_{u\in\text{dom}\phi}\phi(u)$ and $\text{dist}(u, S)$ is the Euclidean distance from $u$ to a set $S$.
The constant $\gamma_{\phi}$ and the set $S^{*}_{\phi}$ are called the \textit{modulus} and \textit{domain of sharpness} for $\phi$ over $S^{*}_{\phi}$, respectively (see \cite{Burke1995}).
 
We have following result. 

%+ Theorem 3.2.
\begin{theorem}\label{thm:main_theorem2}
Under Assumptions \ref{as:A0}-\ref{as:A2}. Suppose that problem \eqref{eq:NLP} satisfies Assumption \ref{as:A3b} with a local solution $x^{*}\in S^{*}$ and suppose further that the set of weak sharp minima $S^{*}_{\phi}$ of $\phi$ is nonempty. Then, if $x^k\in\mathcal{L}_f(f(x^0))$ and $\norm{x^k-x^{*}} \leq \frac{2\sigma^{*}_f}{3\bar{L}+5\underline{L}}$ then $x^{k+1}\in \mathcal{L}_f(f(x^0))$ and
\begin{equation}\label{eq:thm32_est1}
\norm{x^{k+1}-x^{*}} \leq \frac{3(\bar{L}+\underline{L})\norm{x^k-x^{*}}^2}{2(\sigma_f^{*}-\underline{L}\norm{x^k-x^{*}})} \leq \norm{x^k-x^{*}}. 
\end{equation}
where $\underline{L} := L_g + \gamma_{\phi}L_F$ and $\sigma^{*}_f := \sigma^{*}_g + \gamma_{\phi}\sigma^{*}_F$.
\end{theorem}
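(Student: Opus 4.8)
The plan is to establish a local quadratic-type contraction near a nondegenerate critical point by exploiting the weak sharp minimality of $\phi$ together with the linearized optimality condition \eqref{eq:NFO_subprob1_a} (we treat the $\textrm{P}_1$ case; the $\textrm{P}_2$ case is analogous). First I would write out the optimality condition at $x^k$ with test point $y = x^{*}$, producing an inequality of the form $[\nabla g(x^k) + \rho_k(x^{k+1}-x^k) + F'(x^k)^T\xi^k]^T(x^{*}-x^{k+1}) \geq 0$, where $\xi^k \in \partial\phi(F(x^k)+F'(x^k)(x^{k+1}-x^k))$ and $x^{k+1} = V_{\rho_k}(x^k)$. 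The key idea is to compare this with the stationarity condition \eqref{eq:FNC_NLP} at $x^{*}$: there exist $\xi^{*}\in\partial\phi(F(x^{*}))$ and a normal-cone element realizing $0 = \nabla g(x^{*}) + F'(x^{*})^T\xi^{*} + w^{*}$. Subtracting and using monotonicity of the subdifferential of $\phi$ and convexity of $g$ (or $L_g$-Lipschitz gradient of $\nabla g$ in the $\textrm{P}_1$ case) I would derive a bound $\rho_k\norm{x^{k+1}-x^{*}}^2 \lesssim$ (error terms involving $\norm{x^k - x^{*}}$ and $\norm{x^{k+1}-x^{*}}$ via the linearization residual $\norm{F(x^k)+F'(x^k)(x^{k+1}-x^k) - F(x^{*})}$, controlled by \eqref{eq:estimate_b} and $L_F$).

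Second, the nondegeneracy $\sigma^{*}_F > 0$ and the weak sharp minimum modulus $\gamma_{\phi}$ are what convert a first-order (linear in $\norm{x^{k+1}-x^{*}}$) estimate into the stated contraction: the point is that $\sigma^{*}_f := \sigma^{*}_g + \gamma_{\phi}\sigma^{*}_F$ gives a lower bound on how much the combined map $\nabla g + F'^T\partial\phi\circ F$ ``pushes back'' — concretely, $\norm{F'(x^{*})(x^{k+1}-x^{*})} \geq \sigma^{*}_F\norm{x^{k+1}-x^{*}}$ and the sharpness inequality \eqref{eq:sharp_minima} lets me lower-bound $\phi(F(x^{k+1})) - \phi(F(x^{*}))$ by $\gamma_{\phi}\,\mathrm{dist}(F(x^{k+1}), S^{*}_{\phi})$. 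Feeding these lower bounds against the upper bounds from the previous step, and absorbing the perturbation terms (replacing $F'(x^{*})$ by $F'(x^k)$ costs $L_F\norm{x^k-x^{*}}$, hence the $\underline{L} := L_g + \gamma_{\phi}L_F$ in the denominator), should yield
\begin{equation*}
\bigl(\sigma^{*}_f - \underline{L}\norm{x^k-x^{*}}\bigr)\norm{x^{k+1}-x^{*}} \leq \tfrac{3}{2}(\bar{L}+\underline{L})\norm{x^k-x^{*}}^2,
\end{equation*}
which is exactly the first inequality in \eqref{eq:thm32_est1}. The second inequality, $\norm{x^{k+1}-x^{*}} \leq \norm{x^k-x^{*}}$, then follows from the radius hypothesis $\norm{x^k-x^{*}} \leq \frac{2\sigma^{*}_f}{3\bar{L}+5\underline{L}}$: one checks algebraically that this bound makes $\frac{3(\bar{L}+\underline{L})\norm{x^k-x^{*}}}{2(\sigma^{*}_f - \underline{L}\norm{x^k-x^{*}})} \leq 1$.

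Finally, to get $x^{k+1}\in\mathcal{L}_f(f(x^0))$ I would invoke Step 1 of Algorithm \ref{alg:A1} (which guarantees $f(x^{k+1}) = f(V_{\rho_k}(x^k)) \leq f_{\rho_k}(x^k) \leq f(x^k)$ by \eqref{eq:f_f_rho}) together with $x^k\in\mathcal{L}_f(f(x^0))$, so monotonicity of the objective does the work; the contraction estimate \eqref{eq:thm32_est1} additionally keeps the iterate inside the neighborhood where the argument can be iterated. The main obstacle I anticipate is the careful bookkeeping in the first two steps: correctly handling the subdifferential/normal-cone elements (they are set-valued, so one must select compatible subgradients at $x^k$ and $x^{*}$ and use monotonicity rather than any explicit formula), and tracking precisely which Lipschitz constants ($L_g$, $L_F$, $L_{\phi}$ versus the sharpness modulus $\gamma_{\phi}$) appear in each error term so that the constants $\bar{L}$, $\underline{L}$, $\sigma^{*}_f$ come out exactly as claimed — in particular getting the numerical factor $\tfrac{3}{2}$ and the denominator constant $3\bar{L}+5\underline{L}$ right will require the linearization residual bound \eqref{eq:estimate_b} to be applied at both $x^k$ and $x^{k+1}$ and combined with the triangle inequality.
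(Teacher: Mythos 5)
Your plan has a genuine gap: the mechanism in your first paragraph (writing the optimality condition \eqref{eq:NFO_subprob1_a} at $x^k$ with test point $y=x^{*}$, subtracting the stationarity condition at $x^{*}$, and invoking monotonicity of $\partial\phi$) cannot produce the quadratic contraction. When you expand the proximal term $\rho_k(x^{k+1}-x^k)^T(x^{*}-x^{k+1}) = -\rho_k\norm{x^{k+1}-x^{*}}^2 + \rho_k(x^k-x^{*})^T(x^{k+1}-x^{*})$, the second summand leaves a term of size $\rho_k\norm{x^k-x^{*}}\,\norm{x^{k+1}-x^{*}}$ on the wrong side, so at best you obtain $\norm{x^{k+1}-x^{*}}\leq(1+\bar{L}/\rho_k)\norm{x^k-x^{*}}+O(\norm{x^k-x^{*}}^2)$ — not even a contraction. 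Moreover, monotonicity of $\partial\phi$ only yields the one-sided inequality $(\xi^k-\xi^{*})^T(u^k-F(x^{*}))\geq 0$, which discards information; the modulus $\gamma_{\phi}$ and the singular value $\sigma^{*}_F$ never enter that calculation as a coefficient of $\norm{x^{k+1}-x^{*}}$. Weak sharp minimality \eqref{eq:sharp_minima} is a statement about \emph{function values} of $\phi$, so it has to be fed a function-value gap, not a subgradient inclusion.

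The missing idea is the sandwich that the paper builds around $f_{\rho_k}(x^k)$: from Lemma \ref{le:upper_estimate} (inequality \eqref{eq:upper_est_star}, with $\rho_k\leq 2\bar{L}$ so $\bar\rho\leq 3\bar{L}$) one gets the \emph{upper} bound $f_{\rho_k}(x^k)-f^{*}\leq\frac{3\bar{L}}{2}\norm{x^k-x^{*}}^2$, while trivially $f_{\rho_k}(x^k)\geq\psi(x^{k+1};x^k)$. It is to the resulting gap $\psi(x^{k+1};x^k)-f^{*}$ that one applies \eqref{eq:sharp_minima} (giving the $\gamma_{\phi}\norm{F(x^k)+F'(x^k)(x^{k+1}-x^k)-F(x^{*})}$ term) and the singular-value lower bounds $\sigma^{*}_g$, $\sigma^{*}_F$ together with \eqref{eq:estimate_a}--\eqref{eq:estimate_b}; this is exactly what makes the left side quadratic in $\norm{x^k-x^{*}}$ and the right side linear in $\norm{x^{k+1}-x^{*}}$ with coefficient $\sigma^{*}_f-\underline{L}\norm{x^k-x^{*}}$. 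Your second paragraph does name these lower-bound ingredients and even lands on the correct key inequality, but without the upper bound \eqref{eq:upper_est_star} there is nothing to feed them against, so the chain does not close. The final algebraic step (radius hypothesis implying the contraction factor is at most one) and the level-set claim via monotone decrease of $f(x^k)$ are fine as you describe them.
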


%+ Proof the theorem
\begin{proof}
From \eqref{eq:upper_est_star} and notting that $S^{*}_{\phi}$ is nonempty, using \eqref{eq:sharp_minima}, we have
\begin{eqnarray}\label{eq:thm32_proof1}
\frac{3\bar{L}}{2}\norm{x^k-x^{*}}^2 &&\geq f_{\rho_i}(x^k)-f^{*} \geq \psi(x^{k+1};x^k) -f^{*}\nonumber\\
&& = g(x) - g(x^{*}) + \nabla g(x)^T(x^{k+1}-x^k)\nonumber\\
&& + \phi(F(x^k)+F'(x^k)(x^{k+1}-x^k)) - \phi(F(x^{*})) \\
&&\geq g(x) - g(x^{*}) + \nabla g(x)^T(x^{k+1}-x^k) \nonumber\\
&&+ \gamma_{\phi}\norm{F(x^k)+F'(x^k)(x^{k+1}-x^k) - F(x^{*})}.\nonumber
\end{eqnarray}
Now, using Assumption \ref{as:A2} and \eqref{eq:estimate_a}, we estimate
\begin{eqnarray}\label{eq:thm32_proof2}
g(x^k) \!-\! g(x^{*}) \!+\! \nabla g(x^k)^T(x^{k+1} \!-\! x^k) &&= g(x^k) - g(x^{*}) - \nabla g(x^{*})(x^k-x^{*}) \nonumber\\
&& + [\nabla g(x^k)-\nabla g(x^{*})]^T(x^{k+1}-x^k) + \nabla g(x^{*})(x^{k+1}-x^{*}) \nonumber\\
&&\geq -\frac{L_g}{2}\norm{x^k-x^{*}}^2 - L_g\norm{x^k-x^{*}}\norm{x^{k+1}-x^k}\\
&& + \sigma_g^{*}\norm{x^{k+1}-x^{*}}\nonumber\\
&&\geq [\sigma_g^{*}-L_g\norm{x^k-x^{*}}]\norm{x^{k+1}-x^{*}} - \frac{3L_g}{2}\norm{x^k-x^{*}}^2.\nonumber 
\end{eqnarray}
Similarly, using Assumption \ref{as:A2} and \eqref{eq:estimate_b}, we have
\begin{eqnarray}\label{eq:thm32_proof3}
F(x^k) \!+\! \nabla F(x^k)(x^{k+1} \!\!- \! x^k) \!-\! F(x^{*}) \geq [\sigma_F^{*}\!-\!L_F\norm{x^k\!-\!x^{*}}]\norm{x^{k+1}-x^{*}} \!-\! \frac{3L_F}{2}\norm{x^k-x^{*}}^2. 
\end{eqnarray}
Plugging \eqref{eq:thm32_proof2} and \eqref{eq:thm32_proof3} into \eqref{eq:thm32_proof1} we get
\begin{eqnarray*}
\frac{3\bar{L}+3L_g+3\gamma_{\phi}L_F}{2}\norm{x^k-x^{*}}^2 \geq \left[\sigma_g^{*}+\gamma_{\phi}\sigma_F^{*} - (L_g+\gamma_{\phi}L_F)\norm{x^k-x^{*}}\right]\norm{x^{k+1}-x^{*}}.   
\end{eqnarray*}
Since $\norm{x^k-x^{*}} \leq \frac{\sigma_g^{*}+\gamma_{\phi}\sigma^{*}_F}{L_g+\gamma_{\phi}L_F}$ then the last inequality implies the first part of \eqref{eq:thm32_est1}. If $\norm{x^k-x^{*}} \leq \frac{2(\sigma^{*}_g+\gamma_{\phi}\sigma^{*}_F)}{3\bar{L}+5L_g+5\gamma_{\phi}L_F}$ then we obtain the second part of \eqref{eq:thm32_est1}.
\Eproof
\end{proof}
%+ End of proof.

%+ 3.2. Global rate of convergence
\section{Global convergence rate of the unconstrained case}\label{sec:uncon_case}
In this section, we consider the rate of global convergence of Algorithm \ref{alg:A1} based on the subproblem \ref{eq:subprob1} for the unconstrained case, i.e. $\Omega \equiv\mathbf{R}^n$.
 
For a given $x\in\mathcal{F}$, let us define the following matrix mapping from $\mathbf{R}^n\to\mathbf{R}^{n\times(m+1)}$: 
\begin{equation}\label{eq:M_matrix}
M(x) := \begin{bmatrix} F'(x)^T & \nabla g(x)\end{bmatrix}_{n\times (m+1)}. 
\end{equation}
The matrix mapping $M(x)$ is said to be \textit{nondegenerate} at $x$ if $\sigma_{\min}(M(x)) \geq \sigma_M > 0$, the smallest singular value of $M(x)$. Matrix $M(x)$ is said to be nodegenerate on a given set $C$ if it is nondegenerate at any $x\in C$.
We make the following assumption.
%+ Assumption A3.
\begin{assumption}\label{as:A3}
The matrix mapping $M(x)$ is nondegenerate on $\mathcal{L}_f(f(x^0))$.
\end{assumption}
Note that this assumption implies that $m < n$. In term of nonlinear optimization, this is often the case that requires the number of equality constraints is smaller than the number of variables. Assumption \ref{as:A3} is closely related to the linear independent constraint qualification (LICQ) in nonlinear programming.

By using Shur's complement, Assumption \ref{as:A3} is equivalent to $\nabla g(x)\neq 0$ and $\lambda_{\min}(\bar{M}(x)) \geq \sigma_{\bar{M}}^2 > 0$, where $\bar{M}(x) := F'(x)(\norm{\nabla g(x)}^2I_n - \nabla g(x)\nabla g(x)^T)F'(x)^T$ with $I_n$ being the identity matrix. 

%+ Theorem 4.2.
\begin{theorem}\label{thm:global_estimate}
Suppose that Assumptions \ref{as:A0}-\ref{as:A1} and \ref{as:A2}-\ref{as:A3} are satisfied and $x^{*}\in S^{*}$ is a critical point of \eqref{eq:NLP}. Then
\begin{itemize}
\item[a)] Let the sequence $\left\{x^k\right\}$ be generated by Algorithm \ref{alg:A1} based on the subproblem \ref{eq:subprob1} and satisfied Assumption \ref{as:A1b}. If $f(x^k) - f(x^{*}) \geq \frac{\sigma_M^2}{2\bar{L}}$, where $\bar{L}:=L_g + L_{\phi}L_F$, then 
\begin{equation}\label{eq:thm42_est1}
f(x^{k+1}) - f(x^{*}) \leq f(x^k) - f(x^{*}) - \frac{\sigma_M^2}{4\bar{L}}. 
\end{equation}
Otherwise,
\begin{equation}\label{eq:thm42_est2}
f(x^{k+1}) - f(x^{*}) \leq \frac{\bar{L}}{\sigma^2_M}[f(x^k)- f(x^{*})]^2. 
\end{equation}
\item[b)] Let the sequence $\left\{x^k\right\}$ be generated by Algorithm \ref{alg:A1} based on the subproblem \ref{eq:subprob1} with $\rho_k = \bar{L}$ and satisfied Assumption \ref{as:A1c}. If $f(x^k) \geq \frac{\sigma_M^2}{\bar{L}}$ then 
\begin{equation}\label{eq:thm42_est1b}
f(x^{k+1}) - f(x^{*}) \leq f(x^k) - f(x^{*}) - \frac{\sigma_M^2}{2\bar{L}}. 
\end{equation}
Otherwise,
\begin{equation}\label{eq:thm42_est2b}
f(x^{k+1}) - f(x^{*}) \leq \frac{\bar{L}}{2\sigma_M^2}[f(x^k)- f(x^{*})]^2 \leq \frac{1}{2}[f(x^k) - f(x^{*})]^2. 
\end{equation} 
\end{itemize}
\end{theorem}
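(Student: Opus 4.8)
The plan is to combine the descent estimate from Lemma \ref{le:main_estimation} with a lower bound on $\norm{G_{\rho_k}(x^k)}$ obtained from the nondegeneracy of $M(x^k)$, and then feed both into a one-step recursion in the quantity $\Delta_k := f(x^k) - f(x^{*})$. First I would write down, for the subproblem \ref{eq:subprob1} with $\rho_k \le 2\bar{L}$, the decrease inequality $f(x^k) - f(x^{k+1}) \ge \frac{1}{2\rho_k}\norm{G_{\rho_k}(x^k)}^2 \ge \frac{1}{4\bar{L}}\norm{G_{\rho_k}(x^k)}^2$, which follows from Step 1 of Algorithm \ref{alg:A1} together with \eqref{eq:f_f_rho} and the monotonicity in $\rho$ from Lemma \ref{le:V_func}. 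The task then reduces to lower-bounding $\norm{G_{\rho_k}(x^k)}$ in terms of $\Delta_k$.

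For that lower bound I would use the optimality condition \eqref{eq:NFO_subprob1_a} in the unconstrained case ($\Omega = \R^n$), which reads $\nabla f(x^k) + \rho_k(V_{\rho_k}(x^k) - x^k) + F'(x^k)^T\xi(x^k) = 0$, i.e. $G_{\rho_k}(x^k) = \nabla g(x^k) + F'(x^k)^T\xi(x^k)$ with $\xi(x^k) \in \partial\phi(F(x^k) + F'(x^k)d_{\rho_k}(x^k))$. Writing this as $M(x^k)\begin{bmatrix}\xi(x^k)\\ 1\end{bmatrix}$ (up to reordering of blocks), nondegeneracy of $M(x^k)$ gives $\norm{G_{\rho_k}(x^k)} \ge \sigma_M \sqrt{1 + \norm{\xi(x^k)}^2} \ge \sigma_M$. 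On the other hand, convexity of $\psi(\cdot;x^k)$ and the upper-model property \eqref{eq:upper_est} give $f_{\rho_k}(x^k) - f^{*} \le \bar{L}\,\cdot(\text{something})$; more directly, I would bound $\Delta_{k+1} \le f_{\rho_k}(x^k) - f^{*}$ and, using that $x^{k+1}$ minimizes the model while $f^{*} = f(x^{*}) \le \psi(x^{*};x^k) + \tfrac{\bar L}{2}\norm{x^{*}-x^k}^2$ is not directly available, instead derive the sharper inequality $\Delta_k \le \frac{1}{2\rho_k}\norm{G_{\rho_k}(x^k)}^2 + \langle G_{\rho_k}(x^k), x^k - x^{*}\rangle$-type bound and combine with $\Delta_k \le \sqrt{\text{(decrease)}}\cdot\text{const}$. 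The cleaner route, which I expect the authors take, is: $\Delta_{k+1} \le \Delta_k - \frac{1}{4\bar L}\norm{G_{\rho_k}(x^k)}^2$, and separately $\Delta_k \le \frac{1}{\sigma_M}\norm{G_{\rho_k}(x^k)}\sqrt{\Delta_k}$ (obtained by pairing the subgradient-type inequality $\Delta_k \le \langle G_{\rho_k}(x^k), x^k-x^{*}\rangle + \cdots$ with a self-bounding step). Setting $g_k := \norm{G_{\rho_k}(x^k)}$, these read $\Delta_{k+1}\le \Delta_k - \frac{g_k^2}{4\bar L}$ and $g_k^2 \ge \frac{\sigma_M^2 \Delta_k^2}{\Delta_k}= \sigma_M^2$ when $\Delta_k$ is large, or $g_k^2 \ge \sigma_M^2\Delta_k/(\text{bound})$ in general.

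The case split then falls out by substitution: when $\Delta_k \ge \sigma_M^2/(2\bar L)$ one uses $g_k \ge \sigma_M$ to get $\Delta_{k+1} \le \Delta_k - \frac{\sigma_M^2}{4\bar L}$, which is \eqref{eq:thm42_est1}; when $\Delta_k < \sigma_M^2/(2\bar L)$ one uses $g_k^2 \ge \sigma_M^2 \Delta_k / (\text{linear bound in }\Delta_k)$ — more precisely $g_k \cdot \sqrt{\Delta_k} \gtrsim \sigma_M \Delta_k$ forces $g_k^2 \gtrsim \sigma_M^2 \Delta_k^2 / \Delta_k$, but the quadratic rate $\Delta_{k+1} \le \frac{\bar L}{\sigma_M^2}\Delta_k^2$ needs $g_k^2 \ge c\,\sigma_M^2 \Delta_k$ to be upgraded; the trick is that $\Delta_k \le g_k^2/(\text{something})$ already from the decrease plus a telescoping/auxiliary estimate, so that $\Delta_{k+1} \le \Delta_k - \frac{g_k^2}{4\bar L} \le \Delta_k(1 - \frac{g_k^2}{4\bar L \Delta_k})$ and one bounds the bracket. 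Part (b) is handled identically with $\rho_k = \bar L$ fixed, $g \ne 0$ replaced by $\nabla g(x) \ne 0$ from Assumption \ref{as:A3}, and the constants halved because the decrease constant improves from $\frac{1}{4\bar L}$ to $\frac{1}{2\bar L}$.

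\textbf{Main obstacle.} The delicate point is establishing the second, "interpolating" inequality relating $\Delta_k$ to $\norm{G_{\rho_k}(x^k)}$ — i.e. that near-stationarity of the model forces $f(x^k)$ to be close to $f^{*}$. This is exactly where nondegeneracy of $M(x)$ on the level set enters in an essential way, via the identity $G_{\rho_k}(x^k) = M(x^k)(\xi(x^k)^T, 1)^T$; without it $\norm{G_{\rho_k}}$ could be small while $\Delta_k$ stays bounded away from $0$. Pinning down the right constant in $\Delta_k \le (\text{const})\,\norm{G_{\rho_k}(x^k)}^2/\sigma_M^2$ (or the $\sqrt{\Delta_k}$-weighted version) so that the two regimes glue at the threshold $\sigma_M^2/(2\bar L)$ and produce exactly the stated quadratic convergence constant $\bar L/\sigma_M^2$ is the only genuinely non-routine part; everything else is bookkeeping with the estimates already proved in Lemmas \ref{le:V_func}--\ref{le:upper_estimate}.
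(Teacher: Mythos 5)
Your proposal goes down a genuinely different road from the paper, and as written it does not close. The paper's proof never touches the gradient mapping $G_{\rho_k}$. Instead it constructs an explicit auxiliary direction $d^{*}$ solving the linear system $F'(x^k)d=0$, $\nabla g(x^k)^Td + f(x^k)-f(x^{*})=0$; Assumption \ref{as:A3} (via Lemma 6 of \cite{Nesterov2007a}) guarantees a solution with $\norm{d^{*}}\leq [f(x^k)-f(x^{*})]/\sigma_M$, and the two equations plus the $L_{\phi}$-Lipschitz continuity of $\phi$ give $\psi(x^k+d^{*};x^k)\leq f(x^{*})$. Testing the model at the points $x^k+td^{*}$, $t\in[0,1]$, and using convexity of $\phi$ then yields
\begin{equation*}
f(x^{k+1})-f(x^{*}) \leq \min_{t\in[0,1]}\left\{(1-t)\Delta_k + \tfrac{\bar{L}}{\sigma_M^2}t^2\Delta_k^2\right\},
\end{equation*}
and the two regimes of the theorem are exactly the cases where the unconstrained minimizer $t^{*}=\sigma_M^2/(2\bar{L}\Delta_k)$ lies in $[0,1]$ or is clipped at $t^{*}=1$. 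The ``interpolating inequality'' you identify as the main obstacle simply does not occur in this argument; nondegeneracy is used only to bound the norm of $d^{*}$.

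The gap in your write-up is concrete: the quadratic estimate \eqref{eq:thm42_est2} is never derived. The chain of inequalities you gesture at in the middle ($\Delta_k \leq \sigma_M^{-1}\norm{G_{\rho_k}(x^k)}\sqrt{\Delta_k}$, the ``self-bounding step'', the ``telescoping/auxiliary estimate'') is not established anywhere, and even if the bound $g_k^2\gtrsim \sigma_M^2\Delta_k$ were available it would only produce a linear contraction $\Delta_{k+1}\leq(1-\sigma_M^2/4\bar{L})\Delta_k$, not the stated quadratic rate with constant $\bar{L}/\sigma_M^2$. Ironically, your own route can be closed without any such inequality: from the unconstrained optimality condition $G_{\rho_k}(x^k)=M(x^k)(\xi(x^k)^T,1)^T$ and Assumption \ref{as:A3} you get $g_k\geq\sigma_M$ unconditionally on the level set, hence $\Delta_{k+1}\leq\Delta_k-\sigma_M^2/(4\bar{L})$ at every step; and the elementary inequality $u-\tfrac14\leq u^2$ applied with $u=\bar{L}\Delta_k/\sigma_M^2$ shows that $\Delta_k-\sigma_M^2/(4\bar{L})\leq \tfrac{\bar{L}}{\sigma_M^2}\Delta_k^2$ always, which delivers \eqref{eq:thm42_est2} with no case analysis on $g_k$ at all. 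Since you explicitly leave the key step open rather than observing this, the proposal as submitted is an incomplete sketch; if you repair it along either line (the paper's $d^{*}$ construction, or the closing argument just described), do also note that your approach relies on $\Omega=\mathbf{R}^n$ to turn the variational inequality \eqref{eq:NFO_subprob1_a} into the equation $G_{\rho_k}(x^k)=\nabla g(x^k)+F'(x^k)^T\xi(x^k)$, which is indeed the setting of Section \ref{sec:uncon_case}.
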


%+ Proof the theorem.
\begin{proof}
It is sufficient to prove the first part 1. The second part is proved similarly.
Suppose that $x^{*}$ is a local minimizer of \eqref{eq:NLP}. Let us consider the linear system
\begin{equation}\label{eq:LNsys}
\begin{cases}F'(x^k)d = 0\\ \nabla g(x^k)^Td + g(x)-g(x^{*}) + \phi(F(x^k)) -\phi(F(x^{*})) = 0. \end{cases} 
\end{equation}
By Assumption \ref{as:A3}, applying Lemma 6 in \cite{Nesterov2007a} with noting that $g(x^k)-g(x^{*})+\phi(F(x^k))-\phi(F(x^{*})) = f(x^k) - f(x^{*}) \geq 0$ , it implies that there exists a solution $d^{*}$ of the linear system \eqref{eq:LNsys} such that
\begin{equation}\label{eq:proof_thm42_est1}
\norm{d^{*}} \leq \frac{g(x^k)-g(x^{*})+\phi(F(x^k))-\phi(F(x^{*})}{\sigma_{\min}(M(x^k))} = \frac{f(x^k)-f(x^{*})}{\sigma_{\min}(M(x^{*}))}. 
\end{equation}
Now, by the rule at Step 1 of Algorithm \ref{alg:A1}, using the convexity of $\phi$ and noting that $\rho_k \leq 2\bar{L}$ (see Step 1 of Algorithm \ref{alg:A1}), where $\bar{L}:=L_g+L_{\phi}L_F$, we have
\begin{eqnarray}\label{eq:proof_thm42_est2}
f(x^{k+1}) &&\leq f_{\rho_k}(x^k) = \min_{d\in\mathbf{R}^n}\left\{\psi(x^k+d;x^k)+\frac{\rho_k}{2}\norm{d}^2\right\}\nonumber\\
&&\leq \min_{t\in [0,1]}\left\{ g(x^k) + t\nabla g(x^k)^Td^{*} + \phi(F(x^k)+tF'(x^k)d^{*}) + \bar{L}t^2\norm{d^{*}}^2\right\} \\
&&\leq \min_{t\in [0,1]}\left\{ (1-t)f(x^k) + t\psi(x^k+d^{*};x^k) + \bar{L}t^2\norm{d^{*}}^2 \right\} \nonumber.
\end{eqnarray}
Since $\phi$ is Lipschitz continuous, and $d^{*}$ is a solution to \eqref{eq:LNsys}, we have
\begin{eqnarray}\label{eq:proof_thm42_est3}
\psi(x^k+d^{*};x^k) &&= g(x^k)+\nabla g(x^k)^Td^{*} + \phi(F(x^k)+F'(x^k)d^{*}) \nonumber\\
&&\leq \nabla g(x^k)^Td^{*} + g(x^k) - g(x^{*}) + \phi(F(x^k)) -\phi(F(x^{*})\nonumber\\
&&+ \phi(F(x^k)+F'(x^k)d^{*}) - \phi(F(x^k)) + g(x^{*}) + \phi(F(x^{*})\\ 
&&\leq L_{\phi}\norm{F'(x^k)d^{*}} + g(x^{*}) + \phi(F(x^{*}) \nonumber\\
&& = f(x^{*}).\nonumber 
\end{eqnarray}
Combining \eqref{eq:proof_thm42_est1}, \eqref{eq:proof_thm42_est2} and \eqref{eq:proof_thm42_est3}, we obtain
\begin{eqnarray}\label{eq:proof_thm42_est4}
f(x^{k+1}) - f(x^{*}) \leq \min_{t\in [0,1]}\left\{ (1-t)[f(x^k)-f(x^{*}] + \frac{\bar{L}}{\sigma_M^2}t^2[f(x^k)-f(x^{*})]^2\right\}.
\end{eqnarray}
Thus if $f(x^k) - f(x^{*}) \geq \frac{\sigma_M^2}{2L}$ then the right hand side of \eqref{eq:proof_thm42_est4} attains the minimum at $t^{*}=\frac{\sigma^2_M}{2L[f(x^k)-f(x^{*}]}$ and therefore, we have 
\begin{eqnarray*}\label{eq:proof_thm42_est5}
f(x^{k+1}) - f(x^{*}) \leq f(x^k) - f(x^{*}) - \frac{\sigma_M^2}{4\bar{L}}. 
\end{eqnarray*}
Otherwise, it attains the minimum at $t^{*}=1$ and we get
\begin{eqnarray*}\label{eq:proof_thm42_est5}
f(x^{k+1}) - f(x^{*}) \leq \frac{\bar{L}}{\sigma_M^2}[f(x^k) - f(x^{*})]^2. 
\end{eqnarray*}
The theorem is proved.
\Eproof
\end{proof}

Let us define 
\begin{equation*}
D(x^0) := \min\left\{ \norm{x^0-x^{*}}, ~~ x^{*}\in S^{*} \right\},
\end{equation*}
the distance from the initial point $x^0$ to the set of stationary points $S^{*}$.
From Lemma \ref{le:upper_estimate}, Algorithm \ref{alg:A1} can guarantee that $f(x^1)-f(x^{*}) \leq \frac{3}{2}\bar{L}D(x^0)^2$, where $\bar{L}:=L_g+L_{\phi}L_F$. 
Now, using Theorem \ref{thm:global_estimate}, it is easy to see that 
\begin{equation*}
N \leq 1 + \frac{2\bar{L}}{\sigma_M^2}(f(x^1)-f(x^{N+1})) \leq 1 + \frac{4\bar{L}}{\sigma_M^2}[f(x^1)-f(x^{*}] \leq 1 + \frac{6\bar{L}^2}{\sigma_M^2}D^2(x^0).  
\end{equation*}
Thus the number of iterations for Algorithm \ref{alg:A1} starting from $x^0$ to enter into the quadratic convergence region is $N_{\min} := 1 + 6\left[\frac{(L_g+L_{\phi}L_F)D(x^0)}{\sigma_M}\right]^2$.  

%+ 5. Accelerated scheme for the strongly convex objective function.
\section{Accelerated scheme for the strongly convex case.}\label{sec:accelerated_scheme}
When the first term $g(x)$ of the objective function $f(x)$ is strongly convex with a parameter $\tau_{g} \geq L_{\phi}L_F > 0$,
we are able to accelerate Algorithm \ref{alg:A1} by using the same trick as in gradient schemes (see \cite{Nesterov2004,Nesterov2007b}) to solve problem \eqref{eq:NLP}. 
Typically, we require the following assumption.

%+ Assumption A.5.
\begin{assumption}\label{as:A5}
The function $g$ is strongly convex with a parameter $\tau_g$ such that $\tau_g \geq L_{\phi}L_F$. 
\end{assumption}

We consider two sequences $\{a_k\}_{k\geq 0}$ and $\{\varphi_k\}_{k\geq 0}$ generated recursively as follows:
\begin{eqnarray}\label{eq:varphi_k}
&&a_0 := 0, ~~ a_{k+1} := a_{k} + \alpha_{k}, \nonumber\\
&&\varphi_0(x) := \frac{1}{2}\norm{x-x^0}^2, \\
&&\varphi_{k+1}(x) := \varphi_k(x) + \alpha_{k}\left[f(\tilde{V}_{\rho_k}(y^k)) + \frac{1}{\rho_k}\left(\norm{\tilde{G}_{\rho_k}(y^k)}^2 - \tilde{L}\tilde{G}_{\rho_k}(y^k)^T(x-y^k)\right) \right], \nonumber 
\end{eqnarray}
where the sequences $\{\alpha_k\}_{k\geq 0}\subset (0,+\infty)$ and $\{y^k\}_{k\geq 0}$ are given, $\tilde{V}_{\rho_k}(y^k)$ is the solution of \ref{eq:subprob2} with $x=y^k$ and $\rho=\rho_k$, $\tilde{G}_{\rho_k}(y^k) := \rho_k(\tilde{V}_{\rho_k}(y^k)-y^k)$ and $\tilde{L} := \rho_k + L_{\phi}L_F$.

By the construction of $\{a_k\}_{k\geq 0}$ and $\{\varphi_k\}_{k\geq 0}$, it is possible to maintain the following rules for all $k\geq 0$:
\begin{align}
&a_kf(x^k) \leq \varphi_k^{*} := \min_{x\in\Omega}\varphi_k(x), \label{eq:rule_R1}\tag{$R^1_k$}\\
&\varphi_k(x) \leq a_kf(x) + \frac{1}{2}\norm{x-x^0}^2. \label{eq:rule_R2}\tag{$R_k^2$} 
\end{align}
Note that if these rules are maintained then we have 
\begin{equation*}
f(x^k) - f(x^{*}) \leq \frac{\norm{x^0-x^{*}}^2}{2a_k}, ~~ k\geq 1. 
\end{equation*}
Thus by a suitable choice of $\alpha_k$, we can accelerate Algorithm \ref{alg:A1} for this special case.

%+ Lemma 5.1.
\begin{lemma}\label{le:fz_fV}
Under Assumption \ref{as:A5}. If $\tilde{V}_{\rho}(x)$ is the unique solution to \ref{eq:subprob2} then
\begin{equation}\label{eq:fz_fV}
f(z) \!-\! f(\tilde{V}_{\rho}(x)) \geq + \frac{1}{\rho}\left[\norm{\tilde{G}_{\rho}(x)}^2 - \tilde{L}\tilde{G}_{\rho}(x)^T(z-x)\right],
\end{equation}
for all $z\in \Omega$, where $\tilde{L} := L_{\phi}L_F + \rho$.
\end{lemma}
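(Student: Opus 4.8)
The plan is to run the standard one-step estimate that underlies the estimate-sequence technique for accelerated schemes, but carried out on the surrogate $\tilde{\psi}(\cdot;x)$ rather than on $f$ directly. Write $\tilde{V}:=\tilde{V}_{\rho}(x)$ and $\tilde{d}:=\tilde{V}-x$, so $\tilde{G}_{\rho}(x)=\rho\tilde{d}$, and recall that $\tilde{\psi}(y;x)=g(y)+\phi(F(x)+F'(x)(y-x))$ is strongly convex in $y$ with modulus $\tau_g$, since $g$ has that modulus and $y\mapsto\phi(F(x)+F'(x)(y-x))$ is convex. The idea is: pass from $f$ to $\tilde{\psi}$ at both $z$ and $\tilde{V}$, paying the quadratic model error of Lemma~\ref{le:over_estimate} twice; bound the resulting surrogate gap by strong convexity together with the optimality of $\tilde{V}$; and then observe that Assumption~\ref{as:A5} ($\tau_g\geq L_{\phi}L_F$) is exactly enough strong convexity to absorb the two error terms. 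This last point is the crux: without the strong-convexity hypothesis the argument leaves an uncancelled negative $\norm{z-\tilde{V}}^2$ contribution and the estimate fails, so the threshold $\tau_g\geq L_{\phi}L_F$ is not cosmetic.

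Concretely, applying \eqref{eq:over_estimate2} at $y=z$ and at $y=\tilde{V}$ gives
\[
f(z)-f(\tilde{V})\;\geq\;\big[\tilde{\psi}(z;x)-\tilde{\psi}(\tilde{V};x)\big]-\tfrac{1}{2}L_{\phi}L_F\big(\norm{z-x}^2+\norm{\tilde{V}-x}^2\big).
\]
The optimality condition \eqref{eq:NFO_subprob2_a} for $\tilde{V}$ furnishes a subgradient $s=\eta+F'(x)^{T}\xi$ of $\tilde{\psi}(\cdot;x)$ at $\tilde{V}$ (with $\eta\in\partial g(\tilde{V})$, $\xi\in\partial\phi(F(x)+F'(x)\tilde{d})$) satisfying $[s+\rho\tilde{d}]^{T}(z-\tilde{V})\geq0$ for all $z\in\Omega$, so strong convexity yields
\[
\tilde{\psi}(z;x)-\tilde{\psi}(\tilde{V};x)\;\geq\;s^{T}(z-\tilde{V})+\tfrac{\tau_g}{2}\norm{z-\tilde{V}}^2\;\geq\;-\rho\,\tilde{d}^{T}(z-\tilde{V})+\tfrac{\tau_g}{2}\norm{z-\tilde{V}}^2.
\]
Feeding this into the previous display, using $\norm{z-x}^2+\norm{\tilde{V}-x}^2=\norm{z-\tilde{V}}^2+2(z-x)^{T}\tilde{d}$ to regroup, and discarding the nonnegative term $\tfrac{1}{2}(\tau_g-L_{\phi}L_F)\norm{z-\tilde{V}}^2$ via Assumption~\ref{as:A5}, I get $f(z)-f(\tilde{V})\geq-\rho\,\tilde{d}^{T}(z-\tilde{V})-L_{\phi}L_F\,\tilde{d}^{T}(z-x)$. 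Then $\tilde{d}^{T}(z-\tilde{V})=\tilde{d}^{T}(z-x)-\norm{\tilde{d}}^2$ converts the right-hand side into $\rho\norm{\tilde{d}}^2-(\rho+L_{\phi}L_F)\,\tilde{d}^{T}(z-x)$, and substituting $\tilde{d}=\tilde{G}_{\rho}(x)/\rho$ and $\tilde{L}=\rho+L_{\phi}L_F$ gives precisely \eqref{eq:fz_fV}.

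Beyond the algebra, the only step that genuinely uses something is reading off from \eqref{eq:NFO_subprob2_a} a \emph{single} subgradient of $\tilde{\psi}(\cdot;x)$ at $\tilde{V}$ that works simultaneously for every feasible $z$ — this is where optimality of $\tilde{V}$ (as opposed to mere feasibility) enters — together with the bookkeeping observation that the strong-convexity surplus $\tau_g-L_{\phi}L_F\geq0$ exactly matches the two model-error quadratics. I expect no difficulty here; note in particular that, unlike the descent estimates of Section~\ref{sec:grad_method}, no lower bound on $\rho$ is needed, and the constant $\tilde{L}=\rho+L_{\phi}L_F$ emerges on its own.
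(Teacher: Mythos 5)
Your proof is correct and follows essentially the same route as the paper's: both pass from $f$ to the surrogate $\tilde{\psi}(\cdot;x)$ at $z$ and at $\tilde{V}_{\rho}(x)$ (paying the two $\tfrac{1}{2}L_{\phi}L_F\norm{\cdot}^2$ linearization errors), combine strong convexity of $g$ and convexity of $\phi$ with the optimality condition \eqref{eq:NFO_subprob2_a}, and absorb the residual quadratic via $\tau_g\geq L_{\phi}L_F$. Your packaging through Lemma \ref{le:over_estimate} is just a tidier version of the paper's explicit three-term telescoping of $\phi(F(z))-\phi(F(\tilde{V}))$, and your sign convention $\tilde{G}_{\rho}(x)=\rho(\tilde{V}_{\rho}(x)-x)$ matches the one the paper actually uses in this section.
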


%+ Proof Lemma 5.1.
\begin{proof}
For simplicity of notation, we denote by $\tilde{V} :=\tilde{V}_{\rho}(x)$. Since $\phi$ is $L_{\phi}$-Lipschitz continuous and convex, using \eqref{eq:estimate_b}, for any $z\in\Omega$, we have
\begin{eqnarray*}\label{eq:lm51_proof1}
\phi(F(z)) - \phi(F(\tilde{V})) && = \phi(F(z)) - \phi(F(x)+F'(x)(z-x)) \nonumber\\
&& + \phi(F(x)+F'(x)(z-x)) - \phi(F(x)+F'(x)(\tilde{V}-x))\nonumber\\
[-1.5ex]\\[-1.5ex]
&&+ \phi(F(x)+F'(x)(\tilde{V}-x)) - \phi(F(\tilde{V}))\nonumber\\
&& \geq -\frac{L_{\phi}L_F}{2}\left[\norm{z-x}^2 +\norm{\tilde{V}-x}^2\right] + (F'(x)^T\tilde{\xi}(x))^T(z-\tilde{V}),\nonumber   
\end{eqnarray*}
where $\tilde{\xi}(x)\in\partial\phi(F(x)+F'(x)(\tilde{V}-x))$. Therefore,
\begin{eqnarray}\label{eq:lm51_proof2}
f(z) - f(\tilde{V}) &&= g(z)+\phi(F(z)) - g(\tilde{V}) - \phi(F(\tilde{V})) \nonumber\\
&&\geq \nabla g(\tilde{V})^T(z-\tilde{V}) + \frac{\tau_g}{2}\norm{z-\tilde{V}}^2\\
&&-\frac{L_{\phi}L_F}{2}\left[\norm{z-x}^2 +\norm{\tilde{V}-x}^2\right] + (F'(x)^T\tilde{\xi}(x))^T(z-\tilde{V}).\nonumber
\end{eqnarray}
Using the optimality condition for \ref{eq:subprob2} we have
\begin{equation*}\label{eq:lm51_proof3}
\nabla g(\tilde{V})^T(z-\tilde{V}) + (F'(x)^T\tilde{\xi}(x))^T(z-\tilde{V}) \geq \rho(\tilde{V}-x)^T(\tilde{V}-z), ~\forall z\in\Omega. 
\end{equation*}
Substituting this inequality into \eqref{eq:lm51_proof2}, we obtain
\begin{eqnarray*}\label{eq:lm51_proof4}
f(z)-f(\tilde{V}) \geq \frac{\tau_g}{2}\norm{z-\tilde{V}}^2 -\frac{L_{\phi}L_F}{2}\left[\norm{z-x}^2+\norm{V-x}^2\right] + \rho(\tilde{V}-x)^T(\tilde{V}-z). 
\end{eqnarray*}
Since $\tau_g\geq L_{\phi}L_F$ by Assumption \ref{as:A5}, the last inequality implies that
\begin{eqnarray}\label{eq:lm51_proof4}
f(z)-f(\tilde{V}) \geq -L_{\phi}L_F(\tilde{V}-x)^T(z-x) - \rho(\tilde{V}-x)^T(z-x) + \rho(\tilde{V}-x)^T(\tilde{V}-x). 
\end{eqnarray}
Substituting $\tilde{G}_{\rho}(x) = \rho(V-x)$ into \eqref{eq:lm51_proof4}, we obtain \eqref{eq:fz_fV}.
\Eproof 
\end{proof}

%+ Corollary 5.2.
\begin{corollary}\label{co:check_R2}
Under Assumption \ref{as:A5}. Suppose that the sequence of mappings $\{\varphi_k\}_{k\geq 0}$ defined by \eqref{eq:varphi_k}. Then this sequence maintains the rule \eqref{eq:rule_R2}. 
\end{corollary}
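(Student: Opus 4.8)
The plan is to establish rule \eqref{eq:rule_R2} by induction on $k$, with Lemma \ref{le:fz_fV} supplying the one nontrivial estimate. The base case $k=0$ is immediate: by the definition in \eqref{eq:varphi_k} we have $a_0 = 0$ and $\varphi_0(x) = \frac{1}{2}\norm{x-x^0}^2$, so $\varphi_0(x) = a_0 f(x) + \frac{1}{2}\norm{x-x^0}^2$ and \eqref{eq:rule_R2} holds with equality.

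For the inductive step, I would assume $\varphi_k(x) \leq a_k f(x) + \frac{1}{2}\norm{x-x^0}^2$ for all $x\in\Omega$, then use the recursion in \eqref{eq:varphi_k} together with this hypothesis to get
\[
\varphi_{k+1}(x) \leq a_k f(x) + \tfrac{1}{2}\norm{x-x^0}^2 + \alpha_k\Big[ f(\tilde{V}_{\rho_k}(y^k)) + \tfrac{1}{\rho_k}\big(\norm{\tilde{G}_{\rho_k}(y^k)}^2 - \tilde{L}\,\tilde{G}_{\rho_k}(y^k)^T(x-y^k)\big)\Big].
\]
Next I would invoke Lemma \ref{le:fz_fV} with its point $x$ replaced by $y^k$, its parameter $\rho$ replaced by $\rho_k$ (so that the constant $\tilde{L} = L_{\phi}L_F + \rho_k$ there is exactly the one used in \eqref{eq:varphi_k}), and its test point $z$ replaced by the running variable $x\in\Omega$. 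This gives
\[
f(\tilde{V}_{\rho_k}(y^k)) + \tfrac{1}{\rho_k}\big(\norm{\tilde{G}_{\rho_k}(y^k)}^2 - \tilde{L}\,\tilde{G}_{\rho_k}(y^k)^T(x-y^k)\big) \leq f(x).
\]
Substituting this bound into the previous display and using $a_{k+1} = a_k + \alpha_k$ from \eqref{eq:varphi_k} yields $\varphi_{k+1}(x) \leq a_{k+1} f(x) + \frac{1}{2}\norm{x-x^0}^2$, which is \eqref{eq:rule_R2} at stage $k+1$, completing the induction.

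I do not expect a real obstacle here; this is the standard estimating-sequence induction, and the substitution above is the whole content of the step. The only point needing care is bookkeeping: one must check that Assumption \ref{as:A5} (so that $\tau_g \geq L_{\phi}L_F$, which is what Lemma \ref{le:fz_fV} requires) is in force, and that the constant $\tilde{L}$ in the recursive definition \eqref{eq:varphi_k} coincides with the $\tilde{L}=L_{\phi}L_F+\rho$ of Lemma \ref{le:fz_fV} under the identification $\rho=\rho_k$. Once that is matched up, the remaining manipulation is purely linear.
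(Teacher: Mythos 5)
Your proposal is correct and follows the same route as the paper: induction on $k$, the inductive hypothesis applied to $\varphi_k(x)$, and Lemma \ref{le:fz_fV} (with its $x$ set to $y^k$, $\rho=\rho_k$, and $z$ set to the running variable) to bound the bracketed increment by $f(x)$, giving $\varphi_{k+1}(x)\leq a_{k+1}f(x)+\frac{1}{2}\norm{x-x^0}^2$. Nothing is missing.
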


%+ Proof Corollary 5.2.
\begin{proof}
We prove by induction. For $k=0$, it is easy to check that the rule \eqref{eq:rule_R2} is true. Assume that this rule holds for some $k\geq 0$. We prove it is true for $k+1$.
Indeed, from the definition \eqref{eq:varphi_k} of $\varphi_k$ and using Lemma \ref{le:fz_fV} with $x=y^k$, we have
\begin{eqnarray}
\varphi_{k+1}(x) && = \varphi_k(x) + \alpha_k\left[f(\tilde{V}_{\rho_k}(y^k)) + \frac{1}{\rho_k}\left(\norm{\tilde{G}_{\rho_k}(y^k)}^2 -\tilde{L}\tilde{G}_{\rho_k}(y^k)^T(z-y^k)\right) \right]\nonumber\\
&&\leq a_kf(x) + \alpha_k\left[f(\tilde{V}_{\rho_k}(y^k)) + \frac{1}{\rho_k}\left(\norm{\tilde{G}_{\rho_k}(y^k)}^2 -\tilde{L}\tilde{G}_{\rho_k}(y^k)^T(z-y^k)\right) \right]  + \frac{1}{2}\norm{x-x^0}^2\nonumber\\
&&\leq a_{k+1}f(x) + \frac{1}{2}\norm{x-x^0}^2\nonumber\\
&& + \alpha_k\left[f(\tilde{V}_{\rho_k}(y^k)) + \frac{1}{\rho_k}\left(\norm{\tilde{G}_{\rho_k}(y^k)}^2 -\tilde{L}\tilde{G}_{\rho_k}(y^k)^T(z-y^k)\right)  - f(x)\right]\nonumber\\
&&\leq a_{k+1}f(x) +  \frac{1}{2}\norm{x-x^0}^2.\nonumber  
\end{eqnarray}
This inequality shows that the rule \eqref{eq:rule_R2} is maintained.
\Eproof 
\end{proof}
Suppose that $v^k$ is the unique solution of the minimization of the function $\varphi_k$ on $\Omega$, i.e.:
\begin{equation}\label{eq:v_k_def}
v_k := \text{arg}\!\min\left\{\varphi_k(x) ~|~ x\in\Omega\right\},  
\end{equation}
and $\varphi^{*}_k := \varphi_k(v^k)$.
We now generate three sequences $\left\{\tau_k\right\}_{k\geq 0}$, $\left\{y^k\right\}_{k\geq 0}$ and $\left\{x^k\right\}_{k\geq 0}$ by the scheme below:
\begin{eqnarray}\label{eq:tau_y_x_k}
&&\tau_k := \frac{\alpha_k}{a_k+\alpha_k} \in (0,1),\nonumber\\
&&y^k := (1-\tau_k)x^k + \tau_kv^k, \\
&& x^{k+1} := \tilde{V}_{\rho_k}(y^k).\nonumber
\end{eqnarray}

The following lemma shows that the rule \eqref{eq:rule_R1} holds for the sequence $\{x^k\}_{k\geq 0}$ defined by \eqref{eq:tau_y_x_k}.

%+ Lemma 5.2.
\begin{lemma}\label{le:check_R1}
Under Assumption \ref{as:A5}. Suppose that the sequences $\{\varphi_k\}_{k\geq 0}$ and $\{x^k\}_{k\geq 0}$ defined by \eqref{eq:varphi_k} and \eqref{eq:tau_y_x_k}, respectively. Then
\begin{equation}\label{eq:lm52_rule_R2}
\varphi_{k+1}^{*} \geq a_{k+1}f(x^{k+1}) + \frac{1}{\rho_k^2}\left(a_{k+1}\rho_k - \frac{\alpha_k^2\tilde{L}^2}{2}\right)\norm{\tilde{G}_{\rho_k}(y^k)}^2, 
\end{equation}
where $\tilde{L} := L_{\phi}L_F + \rho_k$ and $\tilde{G}_{\rho_k}(y^k) = \rho_k(x^{k+1}-y^k)$.
Moreover, if $0 < \alpha_k\leq \frac{1}{2}(q_k + \sqrt{q_k^2 + 4q_ka_k})$, where $q_k:=\frac{2\rho_k}{\tilde{L}^2}$, then the rule \eqref{eq:rule_R1} is maintained. 
\end{lemma}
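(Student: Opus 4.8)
The plan is a standard estimating–sequences argument in the spirit of Nesterov, adapted to the composite setting through Lemma~\ref{le:fz_fV}. Write $\tilde{V} := \tilde{V}_{\rho_k}(y^k) = x^{k+1}$ and $\tilde{G} := \tilde{G}_{\rho_k}(y^k) = \rho_k(\tilde{V}-y^k)$ for brevity. The first observation I would record is that every $\varphi_k$ is $1$-strongly convex: $\varphi_0(x)-\tfrac12\norm{x}^2$ is affine and, by \eqref{eq:varphi_k}, each update adds to $\varphi_k$ only a term that is affine in $x$. Hence, since $v^k$ is the unique minimizer of $\varphi_k$ on the closed convex set $\Omega$, the first-order optimality condition together with the exact quadratic expansion of $\varphi_k$ gives
\begin{equation*}
\varphi_k(x) \geq \varphi_k^{*} + \tfrac12\norm{x-v^k}^2, \qquad \forall x\in\Omega.
\end{equation*}

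First I would combine this bound with the recursion \eqref{eq:varphi_k} and with rule \eqref{eq:rule_R1} at index $k$ to get, for all $x\in\Omega$,
\begin{equation*}
\varphi_{k+1}(x) \geq a_kf(x^k) + \tfrac12\norm{x-v^k}^2 + \alpha_k\Big[f(\tilde{V}) + \tfrac1{\rho_k}\big(\norm{\tilde{G}}^2 - \tilde{L}\,\tilde{G}^T(x-y^k)\big)\Big].
\end{equation*}
Next I would lower-bound $a_kf(x^k)$: applying Lemma~\ref{le:fz_fV} with $x=y^k$ and $z=x^k$ yields $f(x^k) \geq f(\tilde{V}) + \tfrac1{\rho_k}\big(\norm{\tilde{G}}^2 - \tilde{L}\,\tilde{G}^T(x^k-y^k)\big)$. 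Substituting this, and collecting the $f(\tilde{V})$ and $\norm{\tilde{G}}^2$ terms (using $a_k+\alpha_k = a_{k+1}$ and $\tilde{V}=x^{k+1}$), leaves
\begin{equation*}
\varphi_{k+1}(x) \geq a_{k+1}f(x^{k+1}) + \tfrac{a_{k+1}}{\rho_k}\norm{\tilde{G}}^2 - \tfrac{\tilde{L}}{\rho_k}\tilde{G}^T\big(a_k(x^k-y^k) + \alpha_k(x-y^k)\big) + \tfrac12\norm{x-v^k}^2.
\end{equation*}
The key simplification is the identity $a_k(x^k-y^k) = \alpha_k(y^k-v^k)$, which follows directly from $y^k=(1-\tau_k)x^k+\tau_kv^k$ with $\tau_k=\alpha_k/a_{k+1}$; it collapses the bracketed vector to $\alpha_k(x-v^k)$, so the linear part becomes $-\tfrac{\alpha_k\tilde{L}}{\rho_k}\tilde{G}^T(x-v^k)$.

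Finally, minimizing the resulting quadratic-in-$x$ right-hand side over $x\in\mathbf{R}^n$ (which bounds $\varphi_{k+1}^{*}=\varphi_{k+1}(v^{k+1})$ from below, since $v^{k+1}\in\Omega$) costs exactly $\tfrac12\big(\tfrac{\alpha_k\tilde{L}}{\rho_k}\big)^2\norm{\tilde{G}}^2$, which produces \eqref{eq:lm52_rule_R2}. For the last assertion, rule \eqref{eq:rule_R1} at index $k+1$ holds as soon as the coefficient $a_{k+1}\rho_k - \tfrac{\alpha_k^2\tilde{L}^2}{2}$ appearing in \eqref{eq:lm52_rule_R2} is nonnegative; writing $q_k=2\rho_k/\tilde{L}^2$ and $a_{k+1}=a_k+\alpha_k$, this is equivalent to the quadratic inequality $\alpha_k^2 - q_k\alpha_k - q_ka_k\leq 0$, whose positive solution set is precisely $0<\alpha_k\leq \tfrac12\big(q_k+\sqrt{q_k^2+4q_ka_k}\big)$. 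I expect the only delicate points to be the bookkeeping in the substitution step and making sure the strong-convexity inequality is applied at the \emph{constrained} minimizer $v^k$ (rather than an unconstrained one); the rest is routine algebra.
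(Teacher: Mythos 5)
Your argument is correct and follows essentially the same route as the paper's own proof: the induction hypothesis (rule $R^1_k$) together with the $1$-strong convexity of $\varphi_k$ at its constrained minimizer $v^k$, Lemma~\ref{le:fz_fV} applied at $z=x^k$ to absorb $a_kf(x^k)$, the identity $a_k(x^k-y^k)=\alpha_k(y^k-v^k)$ from the definition of $y^k$, and a relaxation to an unconstrained minimization followed by completing the square. The concluding quadratic inequality in $\alpha_k$ matches the paper's condition exactly, so no gap remains.
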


%+ Proof Lemma 5.2.
\begin{proof}
We again prove this lemma by induction. For $k=0$ the rule \eqref{eq:rule_R1} is true. Assume that it holds for some $k\geq 0$, we now prove \eqref{eq:rule_R1} holds for $k+1$. For simplicity of notation, we denote by $\tilde{G}^k := \tilde{G}_{\rho_k}(y^k)$.
Note that $\varphi_k$ is strongly convex with parameter $\tau_{\varphi} = 1$, by the assumption of induction, we have
\begin{equation*}\label{eq:lm52_proof1}
\varphi_k(z) \geq \varphi_k^{*} + \frac{1}{2}\norm{z-v^k}^2 \geq a_kf(x^k) + \frac{1}{2}\norm{z-v^{*}}^2, ~~\forall z\in\Omega. 
\end{equation*}
Therefore, using this inequality and Lemma \ref{le:fz_fV} for $z=x^k$, we have
\begin{eqnarray}\label{eq:lm52_proof2}
\varphi_{k+1}^{*} &&= \min_{z\in\Omega}\left\{ \varphi_k(z) + \alpha_k\left[f(x^{k+1}) + \frac{1}{\rho_k}\left(\norm{\tilde{G}^k}^2 -\tilde{L}(\tilde{G}^k)^T(z-y^{k})\right) \right] \right\}\nonumber\\
&& \geq \min_{z\in\Omega}\left\{ a_kf(x^k) + \frac{1}{2}\norm{z-v^k}^2 + \alpha_k\left[f(x^{k+1}) + \frac{1}{\rho_k}\left(\norm{\tilde{G}^k}^2 -\tilde{L}(\tilde{G}^k)^T(z-y^{k})\right) \right] \right\} \nonumber\\
&& \geq a_kf(x^{k+1}) + \frac{a_k}{\rho_k}\left(\norm{\tilde{G}^k}^2 -\tilde{L}(\tilde{G}^k)^T(z-y^{k})\right) + \alpha_kf(x^{k+1}) + \frac{\alpha_k}{\rho_k}\norm{\tilde{G}^{k}}^2\nonumber\\
[-1.5ex]\\[-1.5ex]
&& + \min_{z\in\Omega}\left\{ \frac{1}{2}\norm{z-v^k}^2 - \frac{\alpha_k}{\rho_k}\tilde{L}(\tilde{G}^k)^T(z-y^{k}) \right\} \nonumber\\  
&&= a_{k+1}f(x^{k+1}) + \frac{a_{k+1}}{\rho}\norm{\tilde{G}^k}^2 + \frac{a_k}{\rho_k}\tilde{L}(\tilde{G}^k)^T(y^k-x^{k}) \nonumber\\   
&& + \min_{z\in\Omega}\left\{ \frac{1}{2}\norm{z-v^k}^2 - \frac{\alpha_k}{\rho_k}\tilde{L}(\tilde{G}^k)^T(z-y^{k}) \right\}. \nonumber
\end{eqnarray}
Let us denote the minimization term in the last line of \eqref{eq:lm52_proof2} by $M_k$, then we have 
\begin{eqnarray}\label{eq:lm52_proof3}
M_k &&\geq \min_{z\in\mathbf{R}^n} \left\{ \frac{1}{2}\norm{z-v^k}^2 - \frac{\alpha_k}{\rho_k}\tilde{L}(\tilde{G}^k)^T(z-y^{k}) \right\}\nonumber\\
[-1.5ex]\\[-1.5ex]
&&= -\frac{\alpha_k^2}{2\rho_k^2}\tilde{L}^2\norm{\tilde{G}^k}^2 + \frac{\alpha_k}{\rho_k}\tilde{L}(\tilde{G}^k)^T(y^{k}-v^k).\nonumber
 \end{eqnarray}
Since $a_k(y^k-x^k) + \alpha_k(y^k-v^k) = 0$ by definition \eqref{eq:tau_y_x_k} of $y^k$, plugging this relation and \eqref{eq:lm52_proof3} into \eqref{eq:lm52_proof2} we obtain
\begin{eqnarray*}\label{eq:lm52_proof4}
\varphi_{k+1}^{*} \geq a_{k+1}f(x^{k+1}) + \frac{1}{\rho_k^2}\left(a_{k+1}\rho_k - \frac{\alpha_k^2\tilde{L}^2}{2}\right)\norm{\tilde{G}^k}^2. 
\end{eqnarray*}
The inequality \eqref{eq:lm52_rule_R2} is proved.

Moreover, we note that $0 < \alpha_k\leq \frac{1}{2}(q_k+\sqrt{q_k^2+4q_ka_k})$ then $a_{k+1}\rho_k-\frac{\tilde{L}^2\alpha_k^2}{2} \geq 0$. Hence, $\varphi_{k+1}^{*} \geq a_{k+1}f(x^{k+1})$, i.e. the rule \eqref{eq:rule_R1} holds by induction.
\Eproof 
\end{proof}

According to Lemma \ref{le:check_R1}, the sequence $\{\alpha_k\}_{k\geq}$ has to be chosen such that $0< \alpha_k \leq \frac{q_k+\sqrt{4a_kq_k+q_k^2}}{2}$. 
For simplicity of discussion, in the following algorithm, we choose $\alpha_k := \frac{k+1}{4L_{\phi}L_F}$. The sequence $\{\rho_k\}_{k\geq 0}$ is fixed at $\rho_k=L_{\phi}L_F$ for all $k\geq 0$.
  
The accelerated variant of Algorithm \ref{alg:A1} for solving problem \eqref{eq:NLP} that satisfies Assumption \ref{as:A5} is presented as follows.

%+ Algorithm 5.1
%%%%%%%%%%%%%%%%%%%%%%%%%%%%%%%%%%%%%%%%%%%%%%%%%%%%%%%%%%%%%%%%%%%%%%%%%
\noindent\rule[1pt]{\textwidth}{1.0pt}{~~}
\begin{algorithm}\vskip -0.2cm\label{alg:A2}{~}\end{algorithm}
\vskip -0.3cm
\noindent\rule[1pt]{\textwidth}{0.5pt}
\noindent{\bf Initialization:} Choose $x^0$ in $\Omega$ and fix a parameter $\rho_k := L_{\phi}L_F (:=\hat{L})$ for all $k\geq 0$. 
Set $a_0 :=0 $,  $\varphi_0(z):=\frac{1}{2}\norm{z-x^0}^2$, and $k:=0$.\\
\noindent{\bf Iteration $k$:} For a given $x^k$, execute the four steps below:
\begin{itemize}
\item[]\textit{Step 1}: Compute $v^k$ by solving
\begin{equation}\label{eq:alpha_k}
v^k := \text{arg}\!\min\left\{ \varphi_k(z)~|~ z\in\Omega\right\}. 
\end{equation}
\item[]\textit{Step 2}: Compute $y^k := \frac{k}{k+2}x^k + \frac{2}{k+2}v^k$. 
\item[]\textit{Step 3}: Solve the convex subproblem \ref{eq:subprob2} with $x = y^k$ and $\rho := \rho_k = \hat{L}$ to obtain a unique solution $x^{k+1} := \tilde{V}_{\hat{L}}(y^k)$.
\item[]\textit{Step 4}: Update $\varphi_{k+1}(x)$ by
\begin{equation}\label{eq:varphi_k1}
\varphi_{k+1}(x) :=\varphi_k(x) + \frac{(k+1)}{4\hat{L}}\left[f(x^{k+1}) + \frac{1}{\hat{L}}\norm{\tilde{G}_{\hat{L}}(y^k)}^2 - 2\tilde{G}_{\hat{L}}(y^k)^T(x-y^k) \right]. 
\end{equation}
Increase $k$ by $1$ and go back to Step 1. 
\end{itemize}
\vskip-0.2cm
\noindent\rule[1pt]{\textwidth}{1.0pt}
%%%%%%%%%%%%%%%%%%%%%%%%%%%%%%%%%%%%%%%%%%%%%%%%%%%%%%%%%%%%%%%%%%%%%%%%%
% End of Algorithm 3.1.

At the Step 4 of Algorithm \ref{alg:A2}, to update the function $\varphi_k$, the Lipschitz constants $L_{\phi}$ and $L_F$ are required. Otherwise, a line-search strategy should be used to estimate these constants. 
Problem \eqref{eq:alpha_k} at Step 1 is a minimization of a quadratic function on a convex set. The computational cost of solving this problem depends on the complexity of $\Omega$.

The following theorem proves the convergence of Algorithm \ref{alg:A2} and shows that the global complexity bound is $O(\frac{L_{\phi}L_F\norm{x^0-x^{*}}^2}{k^2})$. 

%+ Theorem 5.5.
\begin{theorem}\label{thm:convergence_theorem_55}
If the sequence $\{x^k\}_{k\geq 0}$ generated by Algorithm \ref{alg:A2} for solving problem \eqref{eq:NLP} satisfies Assumption \ref{as:A5} then, for $k\geq 1$, we have
\begin{equation}\label{eq:convergence_rate54}
f(x^k) - f(x^{*})  \leq \frac{4L_{\phi}L_F\norm{x^0-x^{*}}^2}{k(k+1)},
\end{equation}
where $x^{*}$ is a stationary point to \eqref{eq:NLP}.
\end{theorem}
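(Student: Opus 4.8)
The plan is to combine the two invariants $R^1_k$ and $R^2_k$ established in Corollary~\ref{co:check_R2} and Lemma~\ref{le:check_R1}, and then to control the growth of the scalar sequence $\{a_k\}_{k\geq 0}$ from below. First I would observe that, by the chain of rules, for every $k\geq 1$ we have
\begin{equation*}
a_kf(x^k) \leq \varphi_k^{*} = \min_{x\in\Omega}\varphi_k(x) \leq \varphi_k(x^{*}) \leq a_kf(x^{*}) + \tfrac{1}{2}\norm{x^{*}-x^0}^2,
\end{equation*}
where the first inequality is $R^1_k$, the middle one is the definition of $v^k$, and the last is $R^2_k$ evaluated at $x = x^{*}$. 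Rearranging gives the master estimate
\begin{equation*}
f(x^k) - f(x^{*}) \leq \frac{\norm{x^0-x^{*}}^2}{2a_k}, \qquad k\geq 1,
\end{equation*}
exactly as anticipated in the remark following \eqref{eq:rule_R2}. So the whole theorem reduces to showing $a_k \geq \frac{k(k+1)}{8L_{\phi}L_F}$ for $k\geq 1$.

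Next I would verify that the specific choices in Algorithm~\ref{alg:A2}, namely $\rho_k = \hat L = L_{\phi}L_F$ and $\alpha_k = \frac{k+1}{4\hat L}$, indeed satisfy the admissibility condition $0<\alpha_k\leq \tfrac12\big(q_k+\sqrt{q_k^2+4q_ka_k}\big)$ of Lemma~\ref{le:check_R1} so that $R^1_k$ propagates. With $\rho_k=\hat L$ we get $\tilde L = \rho_k+L_{\phi}L_F = 2\hat L$, hence $q_k = \frac{2\rho_k}{\tilde L^2} = \frac{2\hat L}{4\hat L^2} = \frac{1}{2\hat L}$. Since $a_{k+1} = a_k+\alpha_k$ and $a_0 = 0$, one computes $a_k = \sum_{i=0}^{k-1}\alpha_i = \frac{1}{4\hat L}\sum_{i=1}^{k}i = \frac{k(k+1)}{8\hat L}$, which already gives the desired lower bound on $a_k$ with equality. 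It then remains to check the inequality $\alpha_k\leq\tfrac12\big(q_k+\sqrt{q_k^2+4q_ka_k}\big)$; equivalently $2\alpha_k-q_k\leq\sqrt{q_k^2+4q_ka_k}$, and since the left side may be bounded by squaring it suffices to check $\alpha_k^2-q_k\alpha_k\leq q_ka_k$, i.e. $\alpha_k^2\leq q_k(a_k+\alpha_k) = q_ka_{k+1}$. Substituting the closed forms, $\alpha_k^2 = \frac{(k+1)^2}{16\hat L^2}$ and $q_ka_{k+1} = \frac{1}{2\hat L}\cdot\frac{(k+1)(k+2)}{8\hat L} = \frac{(k+1)(k+2)}{16\hat L^2}$, so the required inequality is just $(k+1)^2\leq (k+1)(k+2)$, which is true. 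Feeding $a_k = \frac{k(k+1)}{8\hat L}$ into the master estimate yields
\begin{equation*}
f(x^k)-f(x^{*}) \leq \frac{\norm{x^0-x^{*}}^2}{2a_k} = \frac{4\hat L\norm{x^0-x^{*}}^2}{k(k+1)} = \frac{4L_{\phi}L_F\norm{x^0-x^{*}}^2}{k(k+1)},
\end{equation*}
which is \eqref{eq:convergence_rate54}.

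For the base case $k=0$ of the induction behind $R^1_0$ and $R^2_0$, note $a_0 = 0$ and $\varphi_0(x) = \tfrac12\norm{x-x^0}^2$, so $\varphi_0^{*} = 0 = a_0f(x^0)$ and $\varphi_0(x) = \tfrac12\norm{x-x^0}^2 = a_0f(x) + \tfrac12\norm{x-x^0}^2$; both rules hold trivially, and Corollary~\ref{co:check_R2} and Lemma~\ref{le:check_R1} propagate them. I would also record that the update \eqref{eq:varphi_k1} in the algorithm is exactly the instance of \eqref{eq:varphi_k} with $\rho_k=\hat L$, $\tilde L = 2\hat L$, and $\alpha_k = \frac{k+1}{4\hat L}$, so there is no mismatch between the abstract recursion and the implemented one; in particular the coefficient $\frac{1}{\rho_k}\cdot\tilde L = \frac{2\hat L}{\hat L} = 2$ matches the ``$2\tilde G_{\hat L}(y^k)^T(x-y^k)$'' term and $\frac{1}{\rho_k} = \frac{1}{\hat L}$ matches the ``$\frac{1}{\hat L}\norm{\tilde G_{\hat L}(y^k)}^2$'' term.

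The main obstacle is not any single hard estimate but rather bookkeeping: one must make sure the constants $\tilde L$, $q_k$, $\rho_k$ are threaded consistently through Lemmas~\ref{le:fz_fV} and~\ref{le:check_R1} with the concrete algorithmic choices, and that Assumption~\ref{as:A5} ($\tau_g\geq L_{\phi}L_F$) is invoked precisely where Lemma~\ref{le:fz_fV} needs it. Once the admissibility inequality $\alpha_k^2\leq q_ka_{k+1}$ is checked, everything else is the telescoping computation of $a_k$ and a single substitution.
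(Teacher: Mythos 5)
Your proposal is correct and follows essentially the same route as the paper: combine the two maintained rules $R^1_k$ and $R^2_k$ to get $f(x^k)-f(x^{*})\leq \frac{\norm{x^0-x^{*}}^2}{2a_k}$, verify that the choice $\alpha_k=\frac{k+1}{4\hat L}$ with $\rho_k=\hat L$ satisfies the admissibility condition of Lemma \ref{le:check_R1} (your check $\alpha_k^2\leq q_ka_{k+1}$ is algebraically the same as the paper's check $a_{k+1}\rho_k-\tfrac12\alpha_k^2\tilde L^2\geq 0$), and then substitute $a_k=\frac{k(k+1)}{8\hat L}$. Your version is in fact slightly more careful than the paper's in recording the base case and the match between the update \eqref{eq:varphi_k1} and the abstract recursion \eqref{eq:varphi_k}, and it correctly keeps the factor $2a_k$ in the denominator where the paper's displayed line drops the $2$ by a typo.
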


%+ Proof Theorem 5.4.
\begin{proof}
From the formula of computing $\tau_k$ at Step 1 of Algorithm \ref{alg:A2}, it implies $\alpha_k = \frac{k+1}{4\hat{L}}$ and, as a consequence, $a_k = \sum_{j=0}^{k-1}\alpha_j = \frac{k(k+1)}{8\hat{L}}$. Moreover, we have
\begin{eqnarray}
a_{k+1}\rho_k - \frac{1}{2}\alpha_k^2\tilde{L}^2 = \frac{(k+1)(k+2)}{8} - \frac{(k+1)^2}{32\hat{L}^2}(2\hat{L})^2 = \frac{(k+1)(k+2)}{4}-\frac{(k+1)^2}{4} > 0. \nonumber 
\end{eqnarray}
Therefore, the sequence $\{x^k\}_{k\geq 0}$ generated by Algorithm \ref{alg:A2} satisfies the assumptions of Corollary \ref{co:check_R2} and Lemma \ref{le:check_R1}. Thus the rules \eqref{eq:rule_R1} and \eqref{eq:rule_R2} are maintained. Using these rules, we deduce 
\begin{eqnarray*}
a_kf(x^k) \leq \varphi_k^{*} \leq a_kf(x^{*}) + \frac{1}{2}\norm{x^0-x^{*}}^2. 
\end{eqnarray*}
Consequently, we obtain
\begin{equation*}
 f(x^k) - f(x^{*}) \leq \frac{\norm{x^0-x^{*}}^2}{a_k} = \frac{4\hat{L}\norm{x^0-x^{*}}^2}{k(k+1)}. 
\end{equation*}
The theorem is proved.
\Eproof 
\end{proof}

%%%%%%%%%%%%%%%%%%%%%%%%%%%%%%%%%%%%%%%%%%%%%%%%%%%%%%%%%%%%%%%%%%%%%%%%%%%%%%%%%%%%%%%%%%%%%%%
%+ Acknowledgments.
%%%%%%%%%%%%%%%%%%%%%%%%%%%%%%%%%%%%%%%%%%%%%%%%%%%%%%%%%%%%%%%%%%%%%%%%%%%%%%%%%%%%%%%%%%%%%%%
\vskip 0.2cm
{\small
\noindent{\textbf{Acknowledgments.}}
This research was supported by Research Council KUL: CoE EF/05/006 Optimization in Engineering(OPTEC), GOA AMBioRICS, IOF-SCORES4CHEM, several PhD/postdoc \& fellow grants; the Flemish Government via FWO: PhD/postdoc grants, projects G.0452.04, G.0499.04, G.0211.05, G.0226.06, G.0321.06, G.0302.07, G.0320.08 (convex MPC), G.0558.08 (Robust MHE), G.0557.08, G.0588.09, research communities (ICCoS, ANMMM, MLDM) and via IWT: PhD Grants, McKnow-E, Eureka-Flite+EU: ERNSI; FP7-HD-MPC (Collaborative Project STREP-grantnr. 223854), Contract Research: AMINAL, and Helmholtz Gemeinschaft: viCERP; Austria: ACCM, and the Belgian Federal Science Policy Office: IUAP P6/04 (DYSCO, Dynamical systems, control and optimization, 2007-2011).
}

\bibliographystyle{plain}

\end{document}